\definecolor{bluecite}{HTML}{0875b7}
\newcommand{\diff}{\,\mathrm{d}}
\DeclareMathOperator{\di}{div}
\newtheorem{theorem}{Theorem}[section]
\theoremstyle{definition}
\newtheorem{remark}{Remark}[section]
\numberwithin{equation}{section}
\title[Sharp spectral gaps]{Sharp spectral gap estimates for higher-order operators on Cartan-Hadamard manifolds}
\author{Csaba Farkas}\address{\textsc{Csaba Farkas}: Department of Mathematics and Computer Sciencie, Sapientia Hungarian University of Transylvania, Tg. Mure\c s, Romania}	\email{farkascs@ms.sapientia.ro \& farkas.csaba2008@gmail.com}
\author{S\'andor Kaj\'ant\'o}\address{ \textsc{S\'andor Kaj\'ant\'o}: Department of Mathematics and Computer Science, Babe\c s-Bolyai University, Cluj-Napoca, Romania}
\email{sandor.kajanto@ubbcluj.ro}
\author{Alexandru Krist\'aly}\address{\textsc{Alexandru Krist\'aly}:
Department of Economics, Babe\c s-Bolyai University, Cluj-Napoca, Romania  \& Institute of Applied Mathematics, \'Obuda
University,
Budapest, Hungary
}\email{alex.kristaly@econ.ubbcluj.ro \& kristaly.alexandru@uni-obuda.hu
}
\subjclass[58C40, 35P15, 35R01, 49R05, 58J60
]{58C40, 35P15, 35R01, 49R05, 58J60}
\keywords{Riemannian manifolds, sharp spectral gap, clamped plate, negative curvature.}
\thanks{The authors are supported by the UEFISCDI/CNCS grant PN-III-P4-ID-PCE2020-1001. A. Krist\'aly is also supported by
	Excellence Researcher Program \'OE-KP-2-2022 of \'Obuda University, Budapest, Hungary. Paper dedicated to A.\ Kristály's soccer teammates Anti, Barni, Bidon, Csipi, Endre, Lajos, Lóri, Tasi, Varjú, Vili, Viti and Zoltán.}
\begin{document}
    \begin{abstract} The goal of this paper is to provide sharp spectral gap estimates for problems involving higher-order operators (including both the clamped and buckling plate problems) on Cartan-Hadamard manifolds. The proofs are symmetrization-free -- thus no sharp isoperimetric inequality is needed --  based on two general, yet elementary functional inequalities. The spectral gap estimate for clamped plates solves a sharp asymptotic problem from Cheng and Yang [\textit{Proc.\ Amer.\ Math.\ Soc.}, 2011] concerning the behavior of higher-order eigenvalues on hyperbolic spaces, and answers a question raised in Krist\'aly [\textit{Adv. Math}., 2020] on the validity of such sharp estimates in high-dimensional Cartan-Hadamard manifolds. 
    	 As a byproduct of the general functional inequalities, various Rellich  inequalities are established in the same geometric setting. 
    \end{abstract}

    \maketitle
     \vspace{-0.9cm}

    \section{Introduction} \label{sec:intro}
    
In his celebrated book entitled {\it The Theory of Sound}, Lord Rayleigh \cite{L-R} formulated various questions concerning the qualitative behavior of the first eigenvalue for \textit{fixed membrane}, \textit{clamped plate} and \textit{buckling plate} problems. Although these problems have been posed for domains in the Euclidean setting, the mathematical community started to study them not only within linear structures, but also on curved spaces.  

Due to its second order character, the \textit{fixed membrane problem} turned out to be the most accessible among the aforementioned problems, which can  be written as  
\begin{equation}\label{eq:fmp}
	\begin{cases}
		\Delta_g u = -\lambda_\text{m} u, & \text{in } \Omega,\\
		u=0,& \text{on }\partial \Omega,
	\end{cases}
\end{equation}
where $\Omega$ is an open bounded subset of an \(n(\geq 2)\)-dimensional Riemannian manifold $(M,g)$ and \(\Delta_g\) stands for the \emph{Laplace-Beltrami operator} on $(M,g)$. In the particular case when $(M,g)=(\mathbb R^n,g_0)$ is the standard Euclidean space, Faber \cite{Faber} and Krahn \cite{Krahn} proved that the first eigenvalue of \eqref{eq:fmp} is not smaller than the value $j_{n/2-1,1}^2(\omega_n/{\rm Vol}(\Omega))^{2/n}$, where $j_{\mu,1}$ is the first positive zero of the Bessel function $J_\mu$ of the first kind with order $\mu$, $\omega_n$ is the volume of the unit ball in $\mathbb R^n$, and ${\rm Vol}(\Omega)$ is the volume of $\Omega$; moreover, equality is achieved whenever $\Omega$ is a ball and the eigenvalues for larger and larger balls tend to zero. The crucial step in the proof of Faber-Krahn's result is the P\'olya-Szeg\H o inequality, which is based on  Schwarz symmetrization and the sharp isoperimetric inequality in $\mathbb R^n$. Their proof can be easily extended to any  \textit{Cartan-Hadamard manifold} (complete, simply connected Riemannian manifold with nonpositive sectional curvature) which satisfies the so-called \textit{Cartan-Hadamard conjecture}. The latter conjecture is nothing but the sharp isoperimetric inequality on Cartan-Hadamard manifolds, formally being the same as its classical, Euclidean counterpart; we note that this conjecture is confirmed only in low dimensions $n\in \{2,3,4\}$.     


One of the most surprising facts in spectral theory on Riemannian manifolds is due to  McKean \cite{mckean1970upper}, which roughly states that strong negative curvature produces a universal, domain-independent spectral gap for the first/principal eigenvalue of \eqref{eq:fmp}, which is in  radical contrast with the Euclidean case. More precisely, if the sectional curvature satisfies \({\bf K}\le-\kappa^2\) for some \(\kappa> 0\) on a Cartan-Hadamard manifold $(M,g)$,  then 
\begin{equation}\label{eq:fmpvar}
\lambda_\text{m}(\Omega):=	\inf_{u\in C_0^\infty(\Omega)\setminus \{0\}} \frac{\displaystyle\int_\Omega |\nabla_gu|^2\diff v_g}{\displaystyle\int_\Omega u^2\diff v_g}\ge  \frac{(n-1)^2\kappa^2}{4},
\end{equation} 
for every open bounded subset $\Omega\subset M$, 
where \(\nabla_g\) and \(\diff v_g\) denote the \emph{Riemannian gradient} and the \emph{canonical volume form} on $(M,g)$, respectively.

     Moreover,  the bound in \eqref{eq:fmpvar} is \emph{sharp}; indeed, if we consider the model space form  \(M={\bf M}^n_{-\kappa^2}\) of constant sectional curvature \({\bf K}=-\kappa^2\), and the ball \(\Omega=B_R\subset {\bf M}^n_{-\kappa^2}\)  with radius \(R\), then the first eigenvalue of \eqref{eq:fmp} has the  limiting property
    \[
        \lim_{R\to\infty}\lambda_\text{m}(B_R)=    \frac{(n-1)^2\kappa^2}{4}, 
    \]
    see e.g.\ Chavel \cite{Chavel} and further asymptotically improved versions by Borisov and Freitas \cite{B-F},   Cheng and Yang ~\cite{ChengYang_JDE},  Krist\'aly \cite{Kristaly-ACV}, Savo \cite{Savo}, and references therein.   A natural extension of the above results is the $p$-\textit{fixed membrane problem}, which   can be obtained by replacing the PDE from \eqref{eq:fmp}  by  
    $\Delta_{g,p}=-\lambda_{\text{m},p}|u|^{p-2}u,$
    where \(p>1\) and \(\Delta_{g,p}\) denotes the \emph{\(p\)-Laplace-Beltrami operator}. In this case the sharp estimate reads as 
    \begin{equation}\label{eq:fmpvar:p}
    	\int_\Omega |\nabla_gu|^p\diff v_g\ge  \frac{(n-1)^p\kappa^p}{p^p} \int_\Omega |u|^p\diff v_g,\ \ \forall u\in C_0^\infty(\Omega),
    \end{equation} 
    see He and Yin \cite{yin2014first} and  Kajántó, Kristály, Peter and Zhao \cite{ricattipair2023} for an alternative proof.

  The \emph{clamped plate problem} is definitely more sophisticated than the  fixed membrane problem,  coming from its fourth order character, which is formulated as 
   \begin{equation}\label{eq:cpp}
  	\begin{cases}
  		\Delta_{g}^2u=\lambda_{\text{c}}u, & \text{in } \Omega,\\
  		u=\frac{\partial u}{\partial \bf n}=0,& \text{on }\partial \Omega,
  	\end{cases}
  \end{equation}
 where  \(\Delta_{g}^2\) denotes the \emph{biharmonic operator}, and \(\frac{\partial}{\partial \bf n}\) stands for the outward pointing \emph{normal derivative}. Dealing with Lord Rayleigh's initial conjecture in the Euclidean case $(M,g)=(\mathbb R^n,g_0)$, Ashbaugh and Benguria \cite{A-B} and Nadirashvili \cite{Nadir} stated the sharp  Faber-Krahn-type inequality 
in dimensions 2 and 3, proving that the first eigenvalue for \eqref{eq:cpp} is controlled below by $h_{n/2-1}^4(\omega_n/{\rm Vol}(\Omega))^{4/n}$, where $h_\mu$ is the first positive zero of the cross product of the Bessel functions $J_\mu$ and $I_\mu$. Similarly as in the fixed membrane case, larger and larger domains produce smaller and smaller first eigenvalues, which tend to zero; for a quantitative form, see Antunes,  Buoso and  Freitas \cite{ABF}. 

Clamped plate problems have been recently studied on Riemannian manifolds, both for positively and negatively curved spaces, see Krist\'aly \cite{Kristaly-GAFA, KristalyAdv}. In particular, in Cartan-Hadamard manifolds  with sectional curvature satisfying \({\bf K}\le-\kappa^2\) for some \(\kappa>0\), the author proved a higher-order form of McKean's spectral gap estimate; namely, one has that
 \begin{equation}\label{eq:fmpvar-0}
 	\lambda_\text{c}(\Omega):=	\inf_{u\in C_0^\infty(\Omega)\setminus \{0\}} \frac{\displaystyle\int_\Omega (\Delta_g u)^2\diff v_g}{\displaystyle\int_\Omega u^2\diff v_g}\ge  \frac{(n-1)^4\kappa^4}{16},
 \end{equation} 
whenever the \emph{\(\kappa\)-Cartan-Hadamard conjecture}  holds, see \cite[Theorem 1.1]{KristalyAdv}.  This conjecture is valid for general Cartan-Hadamard  manifolds in dimension \(n\in\{2,3\}\), see Bol \cite{bol1941isoperimetrische} and Kleiner \cite{kleiner1992isoperimetric}, and for space forms \(M={\bf M}^n_{-\kappa^2}\)  in any dimension, see Dinghas \cite{dingas1949}. The proof of \eqref{eq:fmpvar-0}  deeply relies on Schwarz-type symmetrization and the validity of the aforementioned  \emph{\(\kappa\)-Cartan-Hadamard conjecture}, which is the strong  \emph{\(\kappa\)-sharp isoperimetric inequality}; for a detailed discussion,  see  Kloeckner and Kuperberg \cite{Klockner}. 

Our first result, based on a symmetrization-free approach, reads as follows: 

 \begin{theorem}\label{thm:cpp:intro}
	Let \((M,g)\) be an   \(n\)-dimensional Cartan-Hadamard manifold with \(n\ge2\) and assume  that the sectional curvature satisfies \({\bf K}\le-\kappa^2\) for some \(\kappa>0\). Let  \(p>1\) and any domain \(\Omega\subset M\). Then for every \(u\in C_0^\infty(\Omega)\)  one has
	\begin{equation}\label{eq:cppvar}
		\int_\Omega |\Delta_g u|^p\diff v_g\ge \frac{(n-1)^{2p}\kappa^{2p}(p-1)^p}{p^{2p}}\int_\Omega  |u|^p\diff v_g.
	\end{equation} 
Moreover, the constant in \eqref{eq:cppvar} is sharp. 
\end{theorem}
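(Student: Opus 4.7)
The plan is to deduce \eqref{eq:cppvar} from the pointwise Laplace comparison $\Delta_g r \geq (n-1)\kappa\coth(\kappa r) \geq (n-1)\kappa$ (which holds on any Cartan-Hadamard manifold with ${\bf K} \leq -\kappa^2$ and already underpins \eqref{eq:fmpvar:p}) via an integration by parts together with two Hölder-type inequalities. The key intermediate object is the weighted Dirichlet energy $\mathcal{E}(u) := \int_\Omega |u|^{p-2}|\nabla_g u|^2 \diff v_g$.

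First I would establish the identity
\begin{equation*}
(p-1)\mathcal{E}(u) = -\int_\Omega u|u|^{p-2}\Delta_g u\diff v_g,
\end{equation*}
obtained by integration by parts from $\nabla_g(|u|^{p-2}u) = (p-1)|u|^{p-2}\nabla_g u$; for $1<p<2$ this is legitimized by the standard regularization replacing $|u|^{p-2}u$ with $(u^2+\varepsilon^2)^{(p-2)/2}u$ and passing to the limit $\varepsilon \to 0^+$. Applying Hölder's inequality on the right-hand side then yields
\begin{equation*}
(p-1)\mathcal{E}(u) \leq \left(\int_\Omega |u|^p\diff v_g\right)^{(p-1)/p}\left(\int_\Omega |\Delta_g u|^p\diff v_g\right)^{1/p}.
\end{equation*}

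Second, I would establish the sharp lower bound
\begin{equation*}
\mathcal{E}(u) \geq \frac{(n-1)^2\kappa^2}{p^2}\int_\Omega |u|^p\diff v_g
\end{equation*}
by reproducing the divergence-theorem argument that yields \eqref{eq:fmpvar:p}: integrating the identity $|u|^p\Delta_g r = \di(|u|^p\nabla_g r) - p|u|^{p-2}u(\nabla_g u\cdot\nabla_g r)$ over $\Omega$, and combining with $\Delta_g r \geq (n-1)\kappa$ and $|\nabla_g r|=1$, gives
\begin{equation*}
(n-1)\kappa\int_\Omega |u|^p\diff v_g \leq p\int_\Omega |u|^{p-1}|\nabla_g u|\diff v_g,
\end{equation*}
and a Cauchy--Schwarz split of $|u|^{p-1}|\nabla_g u|$ as $|u|^{p/2}\cdot |u|^{(p-2)/2}|\nabla_g u|$ on the right yields the claim. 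Combining the two displayed inequalities and raising to the $p$-th power produces exactly the sharp constant $\frac{(n-1)^{2p}\kappa^{2p}(p-1)^p}{p^{2p}}$.

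For sharpness on the model space ${\bf M}^n_{-\kappa^2}$, I would test with radial functions $u_R(x) = \chi_R(r(x))\,e^{-\frac{(n-1)\kappa}{p}r(x)}$, where $r(x)$ is the distance from a fixed pole and $\chi_R$ is a smooth cutoff supported in $[0,R]$ and equal to $1$ on a large annulus that exhausts the space as $R \to \infty$. A direct computation using $\Delta_g u = u''(r) + (n-1)\kappa\coth(\kappa r)u'(r)$ gives, pointwise as $r \to \infty$,
\begin{equation*}
\frac{\Delta_g u_R(x)}{u_R(x)} \to -\frac{(p-1)(n-1)^2\kappa^2}{p^2},
\end{equation*}
while the hyperbolic volume density $\sinh(\kappa r)^{n-1}$ exactly cancels the decay of $|u_R|^p$ in the region where $\chi_R \equiv 1$; both $\int |\Delta_g u_R|^p\diff v_g$ and $\int |u_R|^p\diff v_g$ then grow linearly in $R$ with ratio tending to the claimed constant.

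The main technical point to control is the integration-by-parts identity and the weighted lower bound when $1 < p < 2$, where $|u|^{p-2}$ is singular on $\{u=0\}$; this is resolved by the standard $\varepsilon$-regularization together with dominated convergence. The rest is a direct, symmetrization-free chain of elementary manipulations, which is precisely what allows the method to bypass the unresolved higher-dimensional $\kappa$-Cartan-Hadamard conjecture.
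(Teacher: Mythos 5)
Your proof is correct. Both halves check out: the identity $(p-1)\mathcal{E}(u)=-\int_\Omega u|u|^{p-2}\Delta_g u\diff v_g$ together with H\"older gives $\mathcal{E}(u)\le\frac{1}{p-1}\|u\|_p^{p-1}\|\Delta_g u\|_p$, while the divergence-theorem argument with $\Delta_g\rho\ge(n-1)\kappa$ and Cauchy--Schwarz gives $\mathcal{E}(u)\ge\frac{(n-1)^2\kappa^2}{p^2}\|u\|_p^p$; combining and raising to the $p$-th power produces exactly $\left(\frac{(n-1)^2(p-1)\kappa^2}{p^2}\right)^p$, and your test functions $\chi_R(\rho)e^{-\frac{(n-1)\kappa}{p}\rho}$ on the model space saturate it. The regularization issue for $1<p<2$ is real but handled in the standard way as you indicate.

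The ingredients are the same as the paper's --- Laplace comparison, integration by parts, and convexity --- but you package them more directly. The paper proves an abstract weighted functional inequality (its Theorem~\ref{thm:mainu}) with four free parameter functions $w,G,H,W$ subject to an ordinary differential inequality: it lower-bounds $|\Delta_g u|^p$ pointwise by the convexity of $\xi\mapsto|\xi|^p$ with undetermined scalars $a,b$ playing the roles of $G$ and $H$, then passes to the intermediate term $\int G|u|^{p-2}|\nabla_g u|^2$ via Green's identity, lower-bounds \emph{that} by a second pointwise convexity step plus Laplace comparison, and finally maximizes a two-variable function $f(a,b)$ to extract the sharp constant. Your proof replaces the pointwise convexity inequalities with their integral counterparts: H\"older's inequality in the first step, Cauchy--Schwarz in the second. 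These deliver the optimal choices of $a$ and $b$ automatically, so there is no residual optimization. What the paper's heavier framework buys is reusability with nonconstant weights --- the same Theorem~\ref{thm:mainu} subsequently yields the weighted Rellich inequalities in Section~\ref{sec:Rellich} by different substitutions --- whereas your argument is tailored to this one statement. For the purpose of proving Theorem~\ref{thm:cpp:intro} alone, your version is a cleaner derivation of the same result, and it shares the paper's key structural advantage of avoiding symmetrization and any isoperimetric input.
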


We notice that \eqref{eq:cppvar} is known on the hyperbolic space \(M={\bf M}^n_{-\kappa^2}\) by Ng{\^o} and Nguyen \cite{ngo2019sharp}. In the latter paper, the authors deeply explore symmetrization techniques combined with the validity of the  \(\kappa\)-sharp isoperimetric inequality, where the model structure of \(M={\bf M}^n_{-\kappa^2}\) plays a crucial role. Note, however, that in Theorem \ref{thm:cpp:intro}  we deal with \textit{generic} Cartan-Hadamard manifolds, and no symmetrization is applied. In fact, the proof is based on a \textit{general functional inequality} (see Theorem \ref{thm:mainu}), making connection  between \(|\Delta_g u|^p\) and \(|u|^p\) for general \(p>1\),
 whose proof uses only the divergence theorem, a Laplace comparison and the convexity of the function $ |\cdot|^p$ with $p>1$. 

In addition, Theorem \ref{thm:cpp:intro} extends not only the validity of \eqref{eq:fmpvar-0} to any dimension (this estimate being proved only in dimensions 2 and 3, cf.\  Krist\'aly \cite{KristalyAdv}), but also solves the claim raised in  Cheng and Yang \cite{ChengYangPAMS} and Li, Jing and Zeng \cite{LiJingZeng}. In fact, in the latter two works the authors proved that \textit{if} the first eigenvalue of the clamped plate problem satisfies
\begin{equation}\label{eq:limcond}
	\lim_{R\to\infty}\lambda_{\text{c}}(B_R)=\frac{(n-1)^4}{16},    
\end{equation}
where $B_R$ is the geodesic ball in the hyperbolic space ${\bf M}^n_{-1}$, then the same limit should be valid also for the $l^{\rm th}$ eigenvalues of \eqref{eq:cpp}, $l\geq 2.$ Now, in view of Theorem \ref{thm:cpp:intro}, the assumption  \eqref{eq:limcond} in   \cite{ChengYangPAMS,LiJingZeng} turns out to be superfluous.


For the \emph{buckling plate problem}, which can be states as
    \begin{equation}\label{eq:bpp}
        \begin{cases}
            \Delta_{g}^2u=-\lambda_{\text{b}}\Delta_gu, & \text{in } \Omega,\\
            u=\frac{\partial u}{\partial \bf n}=0,& \text{on }\partial \Omega,
        \end{cases}
    \end{equation}
only a few qualitative information are known in the geometric setting; however, our second result states  a sharp spectral gap on generic Cartan-Hadamard manifolds: 
   
    \begin{theorem}\label{thm:bpp:intro}
        Let \((M,g)\) be an   \(n\)-dimensional Cartan-Hadamard manifold with \(n\ge2\) and assume  that the sectional curvature satisfies \({\bf K}\le-\kappa^2\) for some \(\kappa>0\). If \(\Omega\subseteq M\) is any  domain, then for every \(u\in C_0^\infty(\Omega)\) one has 
        \begin{equation}\label{eq:bppvar}
            \int_\Omega |\Delta_g u|^2\diff v_g\ge \frac{(n-1)^2\kappa^2}{4}\int_\Omega |\nabla_g u|^2\diff v_g.
        \end{equation}
    Moreover, the constant in \eqref{eq:bppvar} is sharp.  
    \end{theorem}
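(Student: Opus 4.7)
The plan is to derive \eqref{eq:bppvar} as a direct consequence of the McKean spectral gap estimate \eqref{eq:fmpvar} combined with a Cauchy--Schwarz argument, and then to verify sharpness via eigenfunction-type test sequences on geodesic balls in the model space $\mathbf{M}^n_{-\kappa^2}$.

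First, for $u\in C_0^\infty(\Omega)$ I would integrate by parts (justified by the divergence theorem since $u$ has compact support in $\Omega$) to obtain
\begin{equation*}
\int_\Omega |\nabla_g u|^2\diff v_g = -\int_\Omega u\,\Delta_g u\diff v_g.
\end{equation*}
Applying the Cauchy--Schwarz inequality to the right-hand side yields
\begin{equation*}
\int_\Omega |\nabla_g u|^2\diff v_g \le \left(\int_\Omega u^2\diff v_g\right)^{1/2}\left(\int_\Omega (\Delta_g u)^2\diff v_g\right)^{1/2}.
\end{equation*}
Next I would invoke McKean's spectral gap \eqref{eq:fmpvar}, namely $\int_\Omega u^2\diff v_g\le \frac{4}{(n-1)^2\kappa^2}\int_\Omega |\nabla_g u|^2\diff v_g$, substitute it into the previous inequality, square both sides, and divide through by $\int_\Omega |\nabla_g u|^2\diff v_g$ (assuming $u\not\equiv 0$, in which case this quantity is strictly positive); this gives precisely \eqref{eq:bppvar}.

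For sharpness, I would exhibit a sequence of test functions in $C_0^\infty(B_R)$ on the model space $\mathbf{M}^n_{-\kappa^2}$ whose Rayleigh quotient $\int (\Delta_g u)^2/\int |\nabla_g u|^2$ tends to $\frac{(n-1)^2\kappa^2}{4}$ as $R\to\infty$. A natural candidate is to start from the first Dirichlet eigenfunction $\varphi_R$ of $-\Delta_g$ on $B_R$: since $-\Delta_g\varphi_R=\lambda_\text{m}(B_R)\varphi_R$, one directly computes
\begin{equation*}
\frac{\int_{B_R}(\Delta_g\varphi_R)^2\diff v_g}{\int_{B_R}|\nabla_g\varphi_R|^2\diff v_g}=\lambda_\text{m}(B_R)\xrightarrow{R\to\infty}\frac{(n-1)^2\kappa^2}{4},
\end{equation*}
by the known limiting behavior of $\lambda_\text{m}(B_R)$ in $\mathbf{M}^n_{-\kappa^2}$ quoted in the introduction. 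A standard smoothing/truncation argument then produces genuine $C_0^\infty(B_R)$ competitors realizing the same limiting quotient.

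I expect the main obstacle to be essentially technical rather than conceptual: justifying that $\varphi_R$ can be replaced by smooth compactly supported approximants without spoiling the asymptotic equality in both the Cauchy--Schwarz step and the McKean step simultaneously. This reduces to standard $H_0^2$-approximation on $B_R$, but one must be careful to control the error in $\|\Delta_g\cdot\|_{L^2}$, not merely in $\|\nabla_g\cdot\|_{L^2}$. Once this density argument is in place, the proof of \eqref{eq:bppvar} and its sharpness is complete.
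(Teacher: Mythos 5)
Your proof of the inequality \eqref{eq:bppvar} is correct, and it is genuinely \emph{different} from (and simpler than) the paper's argument. The paper derives \eqref{eq:bppvar} from a general weighted functional inequality (Theorem~\ref{thm:maingru}), which relies on convexity estimates, integration by parts, and the Laplace comparison principle with carefully chosen parameter functions. You instead observe that the chain
\[
\int_\Omega |\nabla_g u|^2\diff v_g = -\int_\Omega u\,\Delta_g u\diff v_g
\le \Bigl(\int_\Omega u^2\diff v_g\Bigr)^{1/2}\Bigl(\int_\Omega (\Delta_g u)^2\diff v_g\Bigr)^{1/2}
\le \frac{2}{(n-1)\kappa}\Bigl(\int_\Omega |\nabla_g u|^2\diff v_g\Bigr)^{1/2}\Bigl(\int_\Omega (\Delta_g u)^2\diff v_g\Bigr)^{1/2}
\]
(McKean feeding into Cauchy--Schwarz) yields \eqref{eq:bppvar} after dividing and squaring. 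This is essentially Payne's classical inequality $\lambda_{\rm b}(\Omega)\ge\lambda_{\rm m}(\Omega)$ together with McKean's bound, and for this particular theorem it is more economical than the paper's machinery. The trade-off is that the paper's Theorem~\ref{thm:maingru} also yields weighted variants (e.g.\ Theorem~\ref{thm:impr:3}), whereas the Payne-type argument is rigid and does not generalize to $p\neq 2$ or to weighted settings; it also gives no additional information beyond what McKean already provides.

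Your sharpness argument, however, has a genuine gap, and it is not merely technical as you suggest. The first Dirichlet eigenfunction $\varphi_R$ of $-\Delta_g$ on $B_R$ lies in $H_0^1(B_R)\cap H^2(B_R)$ but \emph{not} in $H_0^2(B_R)$: by the Hopf lemma, its outward normal derivative on $\partial B_R$ is strictly nonzero, which is exactly what the clamped boundary condition (and membership in $H_0^2$) forbids. Consequently there is no ``standard $H_0^2$-approximation'' of $\varphi_R$ on $B_R$ by $C_0^\infty(B_R)$ functions converging in the graph norm of $\Delta_g$; any cutoff $\chi_\varepsilon\varphi_R$ produces a cross term $2\nabla_g\chi_\varepsilon\cdot\nabla_g\varphi_R$ whose $L^2$ norm does not go to zero, because $\nabla_g\varphi_R$ is bounded away from zero near $\partial B_R$. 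In fact $\inf_{C_0^\infty(B_R)}\int(\Delta_g u)^2/\int|\nabla_g u|^2=\lambda_{\rm b}(B_R)$ is strictly larger than $\lambda_{\rm m}(B_R)$ for every fixed $R$, so the quotient cannot be driven to $\lambda_{\rm m}(B_R)$ for fixed $R$. What one must actually do is let $R\to\infty$ and control the boundary layer \emph{simultaneously}, which is exactly what the paper does by replacing $\varphi_R$ with the explicit asymptotic profile $u_\delta=\phi(\rho)e^{-s\rho}$, $s=\frac{(n-1)\kappa}{2}$, where $\phi$ is a piecewise-linear plateau function \eqref{eq:truncation:1}: the truncation lives on two shells of unit width while the plateau region has length of order $\delta$, so the truncation error is swallowed in the limit. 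If you want to argue through eigenfunctions rather than through the explicit profile, you must establish quantitative decay of $\varphi_R$ near $\partial B_R$ relative to its bulk $L^2$ mass; this is not an off-the-shelf fact and requires its own estimate.
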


Inequality \eqref{eq:bppvar} is again known in model hyperbolic spaces (even for $p>1$), see Ng{\^o} and Nguyen \cite{ngo2019sharp}, where the aforementioned symmetrization techniques are applied with the sharp isoperimetric inequality. The proof of Theorem \ref{thm:bpp:intro} is carried out by a second \textit{general functional inequality} (see Theorem \ref{thm:maingru}) that makes connection between \(|\Delta_g u|^2\) and \(|\nabla_g u|^2\),  based again on the divergence theorem, Laplace comparison and some convexity arguments. Note, however, that  certain technical difficulties prevent the extension of this functional inequality to the general case $p>1.$

    The sharpness of both constants in \eqref{eq:cppvar} and \eqref{eq:bppvar}  can be established in the usual way, by constructing suitable  sequences of functions in the model space whose limits  provide the sharp constants. Furthermore, if we apply iteratively either  \eqref{eq:cppvar} or  \eqref{eq:bppvar}  for functions \(u\), \(\Delta_g u\), \(\Delta_g^2 u\), \ldots\ and combine the results with inequality \eqref{eq:fmpvar:p}, we obtain higher-order sharp spectral gap estimates; see Theorem \ref{thm:ho:cp} \& \ref{thm:ho:bp}.

    The proofs of our main Theorems \ref{thm:cpp:intro} \& \ref{thm:bpp:intro} easily follow by choosing  constant test functions as the parameter functions in the general functional inequalities (see Theorems \ref{thm:mainu} \& \ref{thm:maingru}). However, choosing different parameter functions  in Theorems \ref{thm:mainu} \& \ref{thm:maingru}, as a byproduct, we obtain simple alternative proofs of classical and weighted Rellich inequalities, as well as their higher-order versions on Cartan-Hadamard  manifolds. As we already noticed, these proofs do not require the validity of any isoperimetric inequality; see Theorem \ref{thm:Rellich:weighted} \& \ref{thm:Rellich:ho}. Finally, by considering more sophisticated parameter functions, we provide elegant proofs to some  Rellich-type inequalities; see Theorem \ref{thm:impr:1}-\ref{thm:impr:3}.

    The paper is structured as follows. In Section \ref{sec:prelim} we recall some preliminary notions and results. In Section~\ref{sec:gfi} we present the two general functional inequalities. In Section~\ref{sec:spectralgaps} we prove the sharp spectral gap estimates from Theorem \ref{thm:cpp:intro} \& \ref{thm:bpp:intro}, and their higher-order variants. In Section~\ref{sec:Rellich} we give a short, alternative proof for the classical and weighted Rellich-type inequality and their higher-order versions. Additionally, we provide short proofs for some well-known Rellich-type inequalities.

    \section{Preliminaries}\label{sec:prelim} 
    In this section we recall some preliminary definitions and results; we mainly follow Gallot,  Hulin and  Lafontaine \cite{gallot2004riemmanian} and Hebey \cite{hebey2000nonlinear}. Let \((M,g)\) be an \(n\)-dimensional Riemannian manifold, with \(n\ge 2\). Let \(p>1\) and \(u\in C^\infty_0(M)\) be a compactly supported smooth function. Let \((x^i)\) be a local coordinate system in the coordinate neighborhood of \(x\in M\). The gradient of \(u\) is \(\nabla_g u\), having components 
    \[
        u^i=g^{ij}\frac{\partial u}{\partial x^j},
    \]
    while the usual Laplace-Beltrami operator is $\Delta_{g}u={\rm div}_g(\nabla_g u)$. 

    If \(u,v\in C_0^2(M)\) then we have the following identities 
    \[
        \int_M u\Delta_g v\diff v_g = -\int_M \nabla_g u\nabla_g v\diff v_g\quad\mbox{and}\quad \int_M u\Delta_g v\diff v_g =\int_M v\Delta_g u\diff v_g,
    \]
    referred as \emph{integration by parts} and \emph{Green's second identity}, respectively.  

    We use the notation \(d_g(x,y)\) for the \emph{Riemannian distance} between \(x,y\in M\). For a fixed \(x_0\in M\) we denote \(d_{x_0}(x)=d_g(x_0,x)\) the distance from \(x_0\).  The \emph{eikonal equation} states that  \(\diff v_g\)-a.e. on \(M\), one has
    \begin{equation}\label{eq:eikonal}
        |\nabla_g d_{x_0}|=1.
    \end{equation}

    For \(\kappa\ge 0\), the model space form ${\bf M}_{-\kappa^2}^n$ is an $n$-dimensional Riemannian manifold with constant sectional curvature ${\bf K}=-\kappa^2$; more precisely
    \[
        {\bf M}_{-\kappa^2}^n = \begin{cases}
            \mathbb{R}^n\mbox{ -- the Euclidean space}, & \mbox{if }\kappa = 0,\\
            \mathbb H^n_{-\kappa^2}\mbox { -- the Hyperbolic space}, & \mbox{if } \kappa>0.
        \end{cases}
    \]

    Define the function \({\bf ct}_\kappa(t)\colon(0,\infty)\to(0,\infty)\) by
    \[
        {\bf ct}_\kappa(t)=\begin{cases}
            \frac{1}{t},&\text{if } \kappa=0,\\
            \kappa\coth(\kappa t),&\text{if } \kappa>0.
        \end{cases}
    \]
    The following Laplace comparison principle holds, see e.g.\ \cite[Theorem 3.101]{gallot2004riemmanian}.
    \begin{theorem}\label{comparison-theorem} Let $(M,g)$ be an $n$-dimensional, complete  Riemannian manifold, with $n\geq 2$. Fix $x_0\in M$ and suppose that the sectional curvature satisfies \({\bf K}\le -\kappa^2\) for some \(\kappa\ge 0\).  Then one has
    \[
        \Delta_g d_{x_0}\ge (n-1)  {\bf ct}_\kappa(d_{x_0}).
    \]
        Moreover, equality holds if and only if $(M,g)$ is isometric to the model space form ${\bf M}_{-\kappa^2}^n$.
    \end{theorem}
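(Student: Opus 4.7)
The plan is to establish the inequality first on the smooth locus of $d_{x_0}$ via a Jacobi-field / Riccati comparison, and then extend it across the cut locus by a barrier argument; the rigidity statement will follow from the equality case of the same comparison. Fix $y\in M\setminus(\{x_0\}\cup\mathrm{Cut}(x_0))$ and let $\gamma\colon[0,r]\to M$, with $r=d_{x_0}(y)$, be the unique minimizing unit-speed geodesic from $x_0$ to $y$. In geodesic polar coordinates, $d_{x_0}$ is smooth at $y$; by \eqref{eq:eikonal} its gradient is $\dot\gamma(r)$, so the restriction of $\mathrm{Hess}\,d_{x_0}$ to the radial direction vanishes, while its restriction to $\dot\gamma(r)^\perp$ coincides with the shape operator $S(r)$ of the geodesic sphere $\{d_{x_0}=r\}$ at $y$. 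Consequently $\Delta_g d_{x_0}(y)=\mathrm{tr}\,S(r)$, and the task reduces to proving $\mathrm{tr}\,S(r)\ge (n-1)\mathbf{ct}_\kappa(r)$.

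The operator $S(r)$ satisfies the matrix Riccati equation $S'(r)+S(r)^2+\mathcal{R}(r)=0$ on $\dot\gamma(r)^\perp$, where $\mathcal{R}(r)X:=R(X,\dot\gamma)\dot\gamma$, together with the asymptotic normalization $S(r)\sim r^{-1}\,\mathrm{Id}$ as $r\to 0^+$. The hypothesis $\mathbf{K}\le -\kappa^2$ is equivalent to $\mathcal{R}(r)\le -\kappa^2\,\mathrm{Id}$ as symmetric operators, and the model shape operator $S_\kappa(r):=\mathbf{ct}_\kappa(r)\,\mathrm{Id}$ solves the same Riccati equation on $\mathbf{M}_{-\kappa^2}^n$ (with $\mathcal{R}$ replaced by $-\kappa^2\,\mathrm{Id}$) under the same asymptotic normalization. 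A standard matrix Riccati comparison (see e.g.\ \cite{gallot2004riemmanian}) then yields $S(r)\ge S_\kappa(r)$ as symmetric operators throughout the domain of $\gamma$; tracing gives the desired pointwise inequality at $y$. To extend beyond the smooth locus, I would invoke the Calabi upper-barrier trick: for any $y\in\mathrm{Cut}(x_0)$ and a minimizing geodesic $\gamma$ from $x_0$ to $y$, the function $z\mapsto d_g(\gamma(\varepsilon),z)+\varepsilon$ is a smooth upper barrier for $d_{x_0}$ at $y$, and the smooth-case inequality propagates to $y$ in the viscosity/distributional sense. (For the Cartan-Hadamard applications of the theorem, $\mathrm{Cut}(x_0)=\emptyset$ and this step is vacuous.)

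For the rigidity direction, equality $\Delta_g d_{x_0}\equiv(n-1)\mathbf{ct}_\kappa(d_{x_0})$ forces equality in the Riccati comparison along every radial geodesic emanating from $x_0$, hence $\mathcal{R}(r)\equiv -\kappa^2\,\mathrm{Id}$ along each such geodesic. This pins down all sectional curvatures of planes containing a radial direction as $-\kappa^2$; letting $x_0$ vary and appealing to completeness together with the Cartan-Ambrose-Hicks theorem then produces a global isometry $(M,g)\cong\mathbf{M}_{-\kappa^2}^n$. The main obstacle I expect is precisely step two: a naive scalar Bochner / Cauchy-Schwarz argument yields lower bounds on $\Delta_g d_{x_0}$ only under Ricci \emph{lower} bounds, which run in the opposite direction of what is needed here, so the sectional curvature \emph{upper} bound really must be exploited at the operator/matrix level in the Riccati comparison.
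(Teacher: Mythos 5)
The paper does not prove this statement; it is quoted verbatim from Gallot--Hulin--Lafontaine \cite[Theorem~3.101]{gallot2004riemmanian}, so there is no in-paper argument to compare against. Your sketch is essentially the standard textbook proof, and the substance is right: you correctly identify that a sectional (rather than Ricci) curvature bound runs in the direction that requires the \emph{matrix} Riccati (equivalently, Hessian) comparison $S(r)\ge \mathbf{ct}_\kappa(r)\,\mathrm{Id}$ rather than the trace/Bochner argument, and the Calabi upper-barrier device is the right way to pass the inequality across the cut locus (vacuous in the Cartan-Hadamard case, as you note).

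The one step I would tighten is the rigidity direction. Equality is hypothesized only for the \emph{fixed} base point $x_0$, so you cannot ``let $x_0$ vary''; and appealing to Cartan--Ambrose--Hicks is an unnecessary detour. The correct and shorter route from your intermediate conclusion is this: equality $\mathrm{tr}\,S(r)=(n-1)\mathbf{ct}_\kappa(r)$ together with the operator bound $S(r)\ge\mathbf{ct}_\kappa(r)\,\mathrm{Id}$ forces $S(r)\equiv\mathbf{ct}_\kappa(r)\,\mathrm{Id}$, and then the Riccati equation pins down $\mathcal R(r)\equiv-\kappa^2\,\mathrm{Id}$ along every radial geodesic from $x_0$. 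The Jacobi equation $J''+\mathcal R J=0$ then has the explicit solution $J(r)=\kappa^{-1}\sinh(\kappa r)\,E(r)$ with $E$ parallel, so in geodesic polar coordinates centered at $x_0$ the metric is exactly $dr^2+\kappa^{-2}\sinh^2(\kappa r)\,d\theta^2$. This already exhibits the pullback metric on $T_{x_0}M$ as the hyperbolic one; since equality in the distributional sense also rules out a singular negative contribution on a cut locus, $\exp_{x_0}$ is a global isometry and $(M,g)\cong\mathbf M^n_{-\kappa^2}$. No second base point, and no Cartan--Ambrose--Hicks, is needed.
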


    \section{General functional inequalities}\label{sec:gfi}
    In this section we present two general functional inequalities. The first inequality connects \(|\Delta_g u|^p\) and \(|u|^p\) for \(p>1\) and it is tailored to provide sharp spectral gap estimate for the clamped plate problem \eqref{eq:cpp}, even for general $p>1$. The second inequality connects \(|\Delta_g u|^2\) and \(|\nabla_g u|^2\), and it is designed to provide sharp spectral gap estimate for the buckling plate problem \eqref{eq:bpp}. 
    
    The first inequality can be stated as follows.
    \begin{theorem}\label{thm:mainu}
        Let \((M,g)\) be an \(n\)-dimensional, complete, non-compact Riemannian manifold, with \(n\ge 2\). Let \(\Omega\subseteq M\) be a domain, \(x_0\in\Omega\) and \(\rho=d_{x_0}\). Let \(p>1\) and suppose that \(L,W,w,G,H\colon(0,\sup\rho)\to(0,\infty) \) satisfy the following conditions:
        \begin{enumerate}[label=\rm({\bf C\arabic*})]
            \item\label{cond:c1} \(L,W\) are continuous, \(w,G\) are of class \(C^2\) and \(H\) is of class \(C^1\);
            \item\label{cond:c2} \(\Delta_g \rho\ge L(\rho)\) in the distributional sense, and \((wG)'\le0\);
            \item\label{cond:c3} the ordinary differential inequality
            \begin{equation}
                (p-1)\left[2(wGH)'+2wGHL-pwGH^2-w|G|^{p'}\right] -(wG)''-(wG)'L \ge W\label{eq:odigen}
            \end{equation}
            holds for the functions \(L(t),W(t),w(t),G(t),H(t)\), for all \(t\in(0,\sup\rho)\).
        \end{enumerate}
        Then for every \(u\in C_0^\infty(\Omega)\) one has
        \[
            \int_\Omega w(\rho)  |\Delta_g u|^p\diff v_g \ge \int_\Omega W(\rho)|u|^p\diff v_g.
        \]
    \end{theorem}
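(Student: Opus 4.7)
The plan is to derive the integrated bound from a single Young/convexity inequality followed by two rounds of integration by parts, each paired with the Laplace comparison in \ref{cond:c2}. Recall the elementary scalar inequality $|a|^p\ge p\,ab-(p-1)|b|^{p'}$, valid for all $a,b\in\mathbb{R}$ and $p>1$, where $p'=p/(p-1)$. Applying it pointwise with $a=\Delta_g u$ and $b=-G(\rho)|u|^{p-2}u$, observing that $(p-1)p'=p$ so that $|b|^{p'}=G^{p'}|u|^p$, and multiplying by $w(\rho)>0$ before integrating over $\Omega$ would give
\begin{equation*}
\int_\Omega w|\Delta_g u|^p\diff v_g\;\ge\;-p\int_\Omega wG|u|^{p-2}u\,\Delta_g u\diff v_g\;-\;(p-1)\int_\Omega w\,G^{p'}|u|^p\diff v_g.
\end{equation*}
This already produces the $-(p-1)w|G|^{p'}$ contribution present in \eqref{eq:odigen}.

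For the first rewriting, I would integrate by parts on the middle term via Green's identity, using the product rule, the eikonal equation $|\nabla_g\rho|=1$ from \eqref{eq:eikonal}, and the pointwise identity $|u|^{p-2}u\,\nabla_g u=\tfrac{1}{p}\nabla_g|u|^p$. A short computation yields
\begin{equation*}
-p\int_\Omega wG|u|^{p-2}u\,\Delta_g u\diff v_g=-\int_\Omega|u|^p\bigl[(wG)''(\rho)+(wG)'(\rho)\Delta_g\rho\bigr]\diff v_g+p(p-1)\int_\Omega wG|u|^{p-2}|\nabla_g u|^2\diff v_g.
\end{equation*}
The combination of $(wG)'\le 0$ from \ref{cond:c2} with $\Delta_g\rho\ge L(\rho)$ yields $(wG)'\Delta_g\rho\le (wG)'L$; testing against $|u|^p\ge 0$ therefore upgrades the bulk integral to a lower bound by $-\int|u|^p[(wG)''+(wG)'L]\diff v_g$, which reproduces the $-(wG)''-(wG)'L$ block of \eqref{eq:odigen}.

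To dispose of the leftover gradient integral I would apply a second completion-of-the-square inequality: the elementary bound $|\alpha+\beta|^2\ge 0$ in $T_xM$ taken with $\alpha=|u|^{(p-2)/2}\nabla_g u$ and $\beta=H(\rho)|u|^{(p-2)/2}u\,\nabla_g\rho$ gives
\[
|u|^{p-2}|\nabla_g u|^2\;\ge\;-2H|u|^{p-2}u\,\nabla_g\rho\cdot\nabla_g u-H^2|u|^p,
\]
where I used the eikonal equation once more. Multiplying by $wG>0$, integrating, then performing one final integration by parts through $|u|^{p-2}u\nabla_g u=\tfrac{1}{p}\nabla_g|u|^p$ and invoking $\Delta_g\rho\ge L$ together with $wGH>0$, I would obtain
\[
p(p-1)\int_\Omega wG|u|^{p-2}|\nabla_g u|^2\diff v_g\;\ge\;2(p-1)\int_\Omega|u|^p\bigl[(wGH)'+wGHL\bigr]\diff v_g-p(p-1)\int_\Omega wGH^2|u|^p\diff v_g.
\]
Summing the three estimates, the coefficient of $|u|^p$ on the right-hand side is exactly $(p-1)\bigl[2(wGH)'+2wGHL-pwGH^2-wG^{p'}\bigr]-(wG)''-(wG)'L$, so \ref{cond:c3} finishes the proof.

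The main technical nuisance will be justifying the calculus identities when $1<p<2$ (where $|u|^{p-2}u$ is only continuous, not $C^1$, at zeros of $u$) and at the cut locus of $x_0$, where $\rho$ is only distributionally smooth and $\Delta_g\rho\ge L$ is understood distributionally. Both issues are standard and resolved by the same regularization: replace $|u|^p$ by $(u^2+\varepsilon)^{p/2}$ and $|u|^{p-2}u$ by $(u^2+\varepsilon)^{(p-2)/2}u$ so that every integrand is smooth, perform all integrations by parts in the regularized setting where the distributional Laplace comparison is compatible with testing against the non-negative function $(u^2+\varepsilon)^{p/2}$, and then send $\varepsilon\to 0^+$ by dominated convergence using the compact support of $u$.
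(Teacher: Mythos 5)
Your proof is correct and follows essentially the same route as the paper's: the same first convexity/Young step (your $|a|^p\ge p\,ab-(p-1)|b|^{p'}$ with $b=-G|u|^{p-2}u$ is exactly the paper's convexity inequality with $\eta=-|G|^{\frac{2-p}{p-1}}G u$, as $(p-1)p'=p$), the same Green identity producing $p(p-1)\int wG|u|^{p-2}|\nabla_g u|^2-\int|u|^p\Delta_g(wG)$, the same completion-of-the-square for the gradient term (the paper phrases it as convexity with $p=2$, you as $|\alpha+\beta|^2\ge 0$ — same inequality), and the same final integration by parts feeding into \ref{cond:c2}--\ref{cond:c3}. Your closing paragraph on regularizing $|u|^{p-2}u$ near zeros of $u$ for $1<p<2$ and handling $\Delta_g\rho\ge L$ distributionally is a point the paper glosses over, and it is a welcome extra.
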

    \begin{proof}
        The convexity of $\xi\mapsto|\xi|^{p}$ implies 
        \begin{equation}\label{eq:conv:proof1}
            |\xi|^{p}\geq |\eta|^{p} + p  |\eta|^{p-2} (\xi-\eta)\eta=p|\eta|^{p-2}\xi\eta+(1-p)|\eta|^{p},\qquad\forall \xi,\eta,
        \end{equation}
        where both \(\xi\) and \(\eta\) are either scalars or vectors of the same type.
        Fix $u\in C_{0}^\infty(\Omega)$ arbitrarily. Choose \[\xi=\Delta_gu\quad\mbox{and}\quad\eta=-|G(\rho)|^\frac{2-p}{p-1}G(\rho)u\] to obtain
        \[
            |\Delta_g u|^p\ge -p G(\rho) |u|^{p-2} u\Delta_g u+(1-p)|G(\rho)|^{p'}|u|^p.   
        \]
        Multiplying both sides by \(w(\rho)>0\) and integrating over \(\Omega\) yields
        \[
            \int_\Omega w(\rho)  |\Delta_g u|^p\diff v_g 
            \ge -p\int_\Omega w(\rho) G(\rho) |u|^{p-2} u\Delta_g u\diff v_g +(1-p)\int_\Omega w(\rho) |G(\rho)|^{p'}|u|^p\diff v_g.
        \]
      
       We shall focus on the second term. 
        On the one hand, using the relation
        \[
            \Delta_g \frac{|u|^p}{p}=(p-1)|u|^{p-2}|\nabla_g u|^2+|u|^{p-2}u\Delta_g u,
        \]
        and Green's second identity leads us to
        \[
            -p\int_\Omega w(\rho) G(\rho) |u|^{p-2} u\Delta_g u\diff v_g=p(p-1)\int_\Omega w(\rho)G(\rho)|u|^{p-2}|\nabla_g u|^2\diff v_g-\int_\Omega|u|^p\Delta[w(\rho)G(\rho)]\diff v_g.
        \]

        On the other hand, choosing \(p=2\), \(\xi=\nabla_g  u\) and \(\eta=-uH(\rho) \nabla_g\rho\) in inequality \eqref{eq:conv:proof1} implies
        \[
            |\nabla_g  u|^2\ge -2H(\rho)u\nabla_g  u\nabla_g  \rho-H(\rho)^2|u|^2,
        \]
        hence
        \begin{align*}
            \int_\Omega w(\rho)G(\rho)|u|^{p-2}|\nabla_g u|^2\diff v_g
            &\ge -2\int_\Omega w(\rho)G(\rho)H(\rho)|u|^{p-2}u\nabla_g  u\nabla_g  \rho\diff v_g-\int_\Omega w(\rho)G(\rho)H(\rho)^2|u|^p\diff v_g.
        \end{align*}
 Finally, an integration by parts yields 
        \begin{align*}
            -2\int_\Omega w(\rho)G(\rho)H(\rho)|u|^{p-2}u\nabla_g  u\nabla_g  \rho \diff v_g&=-\frac{2}{p}\int_\Omega w(\rho)G(\rho)H(\rho)\nabla_g |u|^p\nabla_g \diff v_g \rho\\
            &=\frac{2}{p}\int_\Omega|u|^p\di_g(w(\rho)G(\rho)H(\rho)\nabla_g \rho)\diff v_g.
        \end{align*}

        By the above computations, for every \(u\in C_0^\infty(\Omega)\) one has
        \[
            \int_\Omega w(\rho)  |\Delta_g u|^p\diff v_g \ge \int_\Omega |u|^pW(\rho)\diff v_g,
        \]
        provided that
        \begin{align*}
            W(\rho)&\le(p-1)(2\di_g(w(\rho)G(\rho)H(\rho)\nabla_g\rho)-p w(\rho)G(\rho)H(\rho)^2-w(\rho)|G(\rho)|^{p'})-\Delta(w(\rho)G(\rho))\\
            &\le(p-1)\left[2(w(\rho)G(\rho)H(\rho))'+2w(\rho)G(\rho)H(\rho)\Delta_g\rho-pw(\rho)G(\rho)H(\rho)^2-w(\rho)|G(\rho)|^{p'}\right]\\
            &\qquad -[w(\rho)G(\rho)]''-[w(\rho)G(\rho)]'\Delta_g \rho,
        \end{align*}
        which easily follows by \ref{cond:c2} and \ref{cond:c3}.
    \end{proof}

\begin{remark}\rm Recently, the concept of Riccati-pairs for certain weights has been introduced by Kajántó, Kristály, Peter and Zhao \cite{ricattipair2023} in order to establish sharp Hardy-type inequalities,  similar to the Bessel-pairs defined by Ghoussoub and Moradifam \cite{GM}.    Condition \eqref{eq:odigen} can be viewed as a higher order Riccati-type ordinary differential inequality which is crucial to prove functional inequalities involving the terms \(|\Delta_g u|^p\) and \(|u|^p\).  In the same spirit, inequality \eqref{eq:pdigen} in the forthcoming  Theorem \ref{thm:maingru} plays a similar role for proving  functional inequalities involving the terms  \(|\Delta_g u|^2\) and \(|\nabla_g u|^2\). 
\end{remark}

    For simplicity, we state the second functional inequality in unweighted form as follows.
    
    \begin{theorem}\label{thm:maingru} Let \((M,g)\) be an \(n\)-dimensional complete, non-compact Riemannian manifold, with \(n\ge 2\). Let \(\Omega\subseteq M\) be a domain, \(x_0\in\Omega\) and \(\rho=d_{x_0}\). Suppose that \(L,W,G,H\colon(0,\sup\rho)\to(0,\infty) \) satisfy the following conditions:
        \begin{enumerate}[label=\rm({\bf C\arabic*'})]
            \item\label{cond:c1p} \(L,W\) are continuous, \(G\) is of class \(C^2\) and \(H\) is of class \(C^1\);
            \item\label{cond:c2p} \(\Delta_g \rho\ge L(\rho)\) in the distributional sense;
            \item\label{cond:c3p} the partial differential inequality 
            \begin{equation}
                (W(\rho)H(\rho))'+W(\rho)H(\rho)L(\rho)-W(\rho)H(\rho)^2\ge \Delta_g G(\rho)+G(\rho)^2,\label{eq:pdigen}
            \end{equation}
            holds for \(\rho=d_{x_0}(x)\), for all \(x\in\Omega\).
        \end{enumerate}
        Then for every \(u\in C_0^\infty(\Omega)\) one has
        \[
            \int_\Omega |\Delta_g u|^2\diff v_g \ge \int_\Omega (2G(\rho)-W(\rho))|\nabla_g u|^2\diff v_g.
        \]
    \end{theorem}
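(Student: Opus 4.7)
The plan is to parallel the proof of Theorem \ref{thm:mainu}, but with the convexity inequality $|\xi|^2 \ge 2\xi\eta - |\eta|^2$ (the $p=2$ instance of \eqref{eq:conv:proof1}) deployed twice: once with $(\xi,\eta) = (\Delta_g u, -G(\rho)u)$ to extract a $|\nabla_g u|^2$ term at the cost of a $u^2$ term, and a second time with $(\xi,\eta) = (\nabla_g u, -uH(\rho)\nabla_g\rho)$ to absorb that leftover $u^2$ term back into a $|\nabla_g u|^2$ contribution weighted by $W(\rho)$.

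For the first step, integrating the pointwise bound $|\Delta_g u|^2 \ge -2G(\rho)u\,\Delta_g u - G(\rho)^2 u^2$ over $\Omega$ and using the identity $\Delta_g(u^2) = 2|\nabla_g u|^2 + 2u\,\Delta_g u$ together with Green's second identity yields
\[
\int_\Omega |\Delta_g u|^2 \diff v_g \ge 2\int_\Omega G(\rho)|\nabla_g u|^2\diff v_g - \int_\Omega u^2\bigl(\Delta_g G(\rho)+G(\rho)^2\bigr)\diff v_g.
\]
What remains is to dominate the $u^2$ integral by $\int_\Omega W(\rho)|\nabla_g u|^2\diff v_g$. For this, I would plug $|\nabla_g u|^2 \ge -2H(\rho)u\nabla_g u\nabla_g\rho - H(\rho)^2 u^2$ into $\int_\Omega W(\rho)|\nabla_g u|^2\diff v_g$, integrate the cross term by parts, and use the eikonal equation \eqref{eq:eikonal} together with the expansion $\di_g(W(\rho)H(\rho)\nabla_g\rho) = (WH)'(\rho) + W(\rho)H(\rho)\Delta_g\rho$ to obtain
\[
\int_\Omega W(\rho)|\nabla_g u|^2 \diff v_g \ge \int_\Omega u^2\bigl[(WH)'(\rho) + W(\rho)H(\rho)\Delta_g\rho - W(\rho)H(\rho)^2\bigr]\diff v_g.
\]
Since $u^2 W(\rho)H(\rho) \ge 0$, the distributional inequality $\Delta_g\rho \ge L(\rho)$ from \ref{cond:c2p} may be tested against it, and hypothesis \ref{cond:c3p} then bounds the resulting integrand from below by $\Delta_g G(\rho) + G(\rho)^2$. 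Chaining this with the previous display delivers the asserted inequality $\int_\Omega|\Delta_g u|^2\diff v_g \ge \int_\Omega(2G(\rho)-W(\rho))|\nabla_g u|^2\diff v_g$.

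The principal obstacle is a regularity technicality already present in Theorem \ref{thm:mainu}: $\rho = d_{x_0}$ is not smooth at $x_0$ (and, on a general complete manifold, along its cut locus), so the identities above hold a priori only on $\Omega \setminus \{x_0\}$ and the distributional use of \ref{cond:c2p} must be justified by a standard truncation and limiting procedure. Crucially, the positivity assumptions $W,H > 0$ in \ref{cond:c1p} are precisely what ensure that $u^2 W(\rho)H(\rho)$ is a nonnegative admissible test function, so the distributional inequality is applied with the correct sign and the chain of estimates survives the limit; the remainder of the argument is purely algebraic.
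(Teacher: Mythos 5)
Your proposal is correct and reproduces the paper's own argument essentially verbatim: the same convexity inequality $\xi^2 \ge 2\xi\eta - \eta^2$ is applied first with $(\xi,\eta)=(\Delta_g u, -G(\rho)u)$ and then with $(\xi,\eta)=(\nabla_g u, -uH(\rho)\nabla_g\rho)$, with the same identity for $\Delta_g(u^2)$, Green's identity, integration by parts, and the chaining of \ref{cond:c2p} and \ref{cond:c3p}. The closing remark on the non-smoothness of $\rho$ at $x_0$ and on its cut locus, and on why the positivity of $W,H$ makes $u^2W(\rho)H(\rho)$ an admissible nonnegative test function for the distributional inequality, is a sound observation that the paper leaves implicit, but it does not change the route.
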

    \begin{proof} 
        For \(p=2\), the convexity inequality \eqref{eq:conv:proof1} reads as 
        \begin{equation}\label{eq:conv:proof2}
            \xi^2\ge 2\xi\eta -\eta^2,\quad\forall \xi,\eta.
        \end{equation}
        If \(\xi=\Delta_g u\) and \(\eta=-G(\rho)u\), one has that         \[
            (\Delta_g u)^2 \ge -2 G(\rho) u\Delta_g u - G(\rho)^2u^2.  
        \] 
        Integrating over \(\Omega\) yields
        \[
            \int_\Omega (\Delta_g u)^2\diff v_g \ge -2  \int_\Omega G(\rho) u\Delta_g u\diff v_g -  \int_\Omega G(\rho)^2u^2\diff v_g.   
        \]
        By using relation \(-2u\Delta_g u=2|\nabla_g u|^2-\Delta_g (u^2)\) and Green's second identity in the second term leads us to
        \[
            \int_\Omega (\Delta_g u)^2\diff v_g \ge  2\int_\Omega G(\rho)|\nabla_g u|^2\diff v_g-\int_\Omega(\Delta_g G(\rho))u^2 \diff v_g -  \int_\Omega G(\rho)^2u^2\diff v_g. 
        \]
        To finish our proof it is enough to show that 
        \begin{equation}\label{eq:improofgrad}
            \int_\Omega W(\rho)|\nabla_g u|^2\diff v_g  \ge \int_\Omega(\Delta_g G(\rho)+G(\rho)^2)u^2 \diff v_g.
        \end{equation}
        Choosing \(\xi=\nabla_g u\) and \(\eta=-u H(\rho)\nabla_g\rho\) in inequality \eqref{eq:conv:proof2}, we infer that  
        \[
            |\nabla_g u|^2\ge -2H(\rho)u\nabla_g u\nabla_g \rho- u^2H(\rho)^2.   
        \]
        Multiplying both sides with \(W(\rho)\) and integrating over \(\Omega\) yields
        \[
            \int_\Omega W(\rho) |\nabla_g u|^2\diff v_g\ge -2\int_\Omega W(\rho) H(\rho)u\nabla_g u\nabla_g \rho\diff v_g-  \int_\Omega W(\rho) H(\rho)^2u^2\diff v_g.   
        \]
        An integration by parts and condition \ref{cond:c2p} implies that 
        \begin{align*}
            -2\int_\Omega W(\rho) H(\rho)u\nabla_g u\nabla_g \rho\diff v_g&= -\int_\Omega W(\rho)H(\rho)\nabla_g\rho\nabla_g(u^2)\diff v_g=\int_\Omega\di_g[W(\rho)H(\rho)\nabla_g\rho]u^2\diff v_g\\
            &=\int_\Omega [W'(\rho)H(\rho)+W(\rho)H'(\rho)+W(\rho)H(\rho)\Delta_g\rho]u^2\diff v_g\\
            &\ge \int_\Omega [W'(\rho)H(\rho)+W(\rho)H'(\rho)+W(\rho)H(\rho)L(\rho)]u^2\diff v_g.
        \end{align*}
        Finally condition \ref{cond:c3p} yields \eqref{eq:improofgrad}, concluding the proof.
    \end{proof}
    \begin{remark} Several comments are in order.
        \begin{itemize}[leftmargin=0.5cm]
            \item[a)] Compare \ref{cond:c3} and \ref{cond:c3p} to observe that the first condition involves an ordinary differential inequality, while the second involves a partial differential inequality on the manifold; the latter is  due to the dependence of \(\Delta_g\) on \(\rho\). For a radial function \(G(\rho)\) one has \(\Delta_g G(\rho)=G''(\rho)+G'(\rho)\Delta_g\rho\). Hence \ref{cond:c3p} is genuinely harder to verify than \ref{cond:c3}. However, when \(G\) is constant, \(\rho\) can be simply replaced with a scalar \(t\), and the partial differential inequality reduces to  an ordinary one.
            \item[b)] The technique presented in the proof of Theorem \ref{thm:maingru} only works for \(p=2\). For general \(p>1\) the second term of the convexity inequality \eqref{eq:conv:proof1} contains \(|u|^{p-2}u\Delta_g u\) which can not be transformed into \(|\nabla_g u|^p\).
            \item[c)] We are not aware of any 
           simple convexity arguments (with arbitrary choices of \(\xi\) and \(\eta\)) and possibly multiple uses of integration by parts which could provide a general functional inequality involving integrals of \(|\Delta_g u|^p\) and \(|\nabla_g u|^p\) for general \(p>1\).
        \end{itemize}
    \end{remark}

    \section{Sharp spectral gap estimates}\label{sec:spectralgaps} In this section we prove Theorems \ref{thm:cpp:intro} \& \ref{thm:bpp:intro}. By an iterative applications of these results and using inequality \eqref{eq:fmpvar} we get higher-order estimates as well.
    \subsection{Clamped plate problem: proof of Theorem \ref{thm:cpp:intro}}\label{ssec:clamped} 
        In Theorem \ref{thm:mainu} let us choose
        \[
            \quad L\equiv(n-1)\kappa,\quad W\equiv C,\quad  w\equiv 1,\quad G\equiv a\quad\mbox{and}\quad H\equiv b,  
        \]
        for some constants \(C,a,b>0\), which will be determined later. Condition \ref{cond:c1} clearly holds. By Laplace comparison (see Theorem \ref{comparison-theorem}) one has 
        \[\Delta_g \rho\ge (n-1)\kappa\coth(\kappa \rho)\ge (n-1)\kappa.\]
        Additionally, since \(G\) is constant, condition \ref{cond:c2} holds as well. Inequality \eqref{eq:odigen} from condition \ref{cond:c3} is equivalent to 
        \[
            f(a,b):=(p-1) \left(2ab \kappa 
            (n-1)-a^{\frac{p}{p-1}}-a b^2p\right)\ge C.
        \] 
        The best choice for the constant $C$ is obtained for $$\max_{a,b}f(a,b)=f\left(\left(\frac{(n-1)^2(p-1)\kappa^2}{p^2}\right)^{p-1},\frac{(n-1)\kappa}{p}\right)=\left(\frac{(n-1)^2(p-1)\kappa^2}{p^2}\right)^p.$$ 
           
        To prove the sharpness, fix \(\delta>0\) and define the \emph{truncation function} 
        \begin{equation}\label{eq:truncation:1}
            \phi(t)=\begin{cases}
                t-\frac{\delta}{2},&\mbox{if } t\in\left[\frac{\delta}{2},\frac{\delta}{2}+1\right],\\
                1,&\mbox{if } t\in\left[\frac{\delta}{2}+1,\delta-1\right],\\
                \delta-t,&\mbox{if } t\in\left[\delta-1,\delta\right],\\
                0,& \mbox{otherwise}.
            \end{cases}
        \end{equation}
        Let \(s=\frac{(n-1)\kappa}{p}\)
        and choose 
        $
            u_\delta=\phi(\rho) e^{-s\rho} $ on $\Omega={\bf M}_{-\kappa^2}^n.    
        $
        Due to the definition of \(\rho\),   we have 
        \[
            |\nabla_g\rho|=1\quad\mbox{and}\quad\Delta_g\rho=(n-1)\kappa\coth(\kappa \rho)=ps\coth(\kappa\rho).
        \]
        By using the fact that \(\phi''=0\) (except a finite number of points), one has ${\rm d}v_g$-a.e.\ that
        \begin{align*}
            \nabla_g u_\delta&=(\phi'(\rho)-s\phi(\rho))e^{-s\rho}\nabla_g\rho,\\
            \Delta_g u_\delta&=\left[-2s\phi'(\rho)+s^2\phi(\rho)+(\phi'(\rho)-s\phi(\rho))ps\coth(\kappa\rho)\right]e^{-s\rho}\\
            &=\left[s(p\coth(\kappa \rho)-2)\phi'(\rho)+s^2(1-p\coth(\kappa\rho))\phi(\rho)\right]e^{-s\rho}.
        \end{align*}

        On the one hand, using a polar coordinate transform and the second branch of \eqref{eq:truncation:1} we have
        \[
            \int_\Omega |u_\delta|^p\diff v_g=\int_\frac{\delta}{2}^\delta \phi(t)^pe^{-pst}\frac{\sinh^{n-1}(
            \kappa t)}{\kappa^{n-1}}\diff t\ge \frac{1}{\kappa^{n-1}} \int_{\frac{\delta}{2}+1}^{\delta-1}e^{-pst}\sinh^{n-1}(\kappa t)\diff t.    
        \]
        Observe that 
        \[e^{-pst}\sinh^{n-1}(\kappa t)=(e^{-\kappa t}\sinh(\kappa t))^{n-1}=\left(\frac{1}{2}-\frac{e^{-2\kappa t}}{2}\right)^{n-1}\]
        is strictly increasing in \(t\), thus we have the following estimate
        \[
            \int_\Omega |u_\delta|^p\diff v_g\ge \frac{1}{\kappa^{n-1}}\left(\frac{\delta}{2}-2\right)\left(\frac{1}{2}-\frac{e^{-\kappa \delta-2\kappa}}{2}\right)^{n-1}:=E_1(\delta).
        \]

        On the other hand, similarly to the previous computations,  one has
        \[
            \int_\Omega |\Delta_g u_\delta|^p\diff v_g= \int_\frac{\delta}{2}^\delta \frac{1}{\kappa^{n-1}}\left|s(p\coth(\kappa t)-2)\phi'(t)+s^2(1-p\coth(\kappa t))\phi(t)\right|^p\left(e^{-\kappa t}\sinh(\kappa t)\right)^{n-1}\diff t.
        \]
        Observe that
        $
            \Phi(t)=\left|s(p\coth(\kappa t)-2)\phi'(t)+s^2(1-p\coth(\kappa t))\phi(t)\right|^p\left(e^{-\kappa t}\sinh(\kappa t)\right)^{n-1}
       $
        is bounded. Let \(M_1\) and \(M_2\) be the maximum of \(\Phi(t)\) on \(\left[\frac{\delta}{2},\frac{\delta}{2}+1\right]\) and \(\left[\delta-1,\delta\right]\), respectively. Thus using again \eqref{eq:truncation:1} we have 
        \[
            \int_\Omega |\Delta_g u_\delta|^p\diff v_g\le M_1+M_2+\frac{s^{2p}}{\kappa^{n-1}}\int_{\frac{\delta}{2}+1}^{\delta-1} (p\coth(\kappa t)-1)^p\left(e^{-\kappa t}\sinh(\kappa t)\right)^{n-1}\diff t.
        \]
        Since \((p\coth(\kappa t)-1)^p\) is decreasing, \(e^{-\kappa t}\sinh(\kappa t)\) is increasing, and both expressions are positive, we get 
        \begin{equation}\label{eq:estimate2:proof1}
            \int_\Omega |\Delta_g u_\delta|^p\diff v_g\le M_1+M_2+\frac{s^{2p}}{\kappa^{n-1}}\left(\frac{\delta}{2}-2\right)\left(p\coth\left(\left(\frac{\delta}{2}+1\right)\kappa\right)-1\right)^p\left(\frac{1}{2}-\frac{e^{-2\kappa (\delta-1)}}{2}\right)^{n-1}:=E_2(\delta).
        \end{equation}
        Using the two estimates, we have 
        \[
             \lim_{\delta\to\infty}\frac{\displaystyle\int_\Omega|\Delta_g u_\delta|^p\diff v_g}{\displaystyle\int_\Omega|u_\delta|^p\diff v_g}\le \lim_{\delta\to\infty}\frac{E_2(\delta)}{E_1(\delta)}=s^{2p}(p-1)^p=\left(\frac{(n-1)^2(p-1)\kappa^2}{p^2}\right)^p,
        \]
        hence the inequality is sharp.
\hfill $\square$


    \subsection{Buckling plate problem: proof of Theorem \ref{thm:bpp:intro}}\label{ssec:buckling} 
        In Theorem \ref{thm:maingru} let us choose 
        \[
            L\equiv(n-1)\kappa,\quad W\equiv C,\quad G\equiv a\quad\mbox{and}\quad H\equiv b,   
        \]
        for some constants \(C,a,b>0\), which will be determined later. Condition \ref{cond:c1p} clearly holds. By Laplace comparison (see Theorem \ref{comparison-theorem}) one has 
        \[\Delta_g \rho\ge (n-1)\kappa\coth(\kappa \rho)\ge (n-1)\kappa,\]
        hence condition \ref{cond:c2p} holds as well. Inequality \eqref{eq:pdigen} from condition \ref{cond:c3p} is equivalent to   
        \[
            Cb(n-1)\kappa-Cb^2\ge a^2.
        \]
        Provided that the above inequality holds Theorem \ref{thm:maingru} implies 
        \[
            \int_\Omega |\Delta_g u|^2\diff v_g\ge (2a-C)\int_\Omega|\nabla_g u|^2 \diff v_g,\qquad\forall u\in C_0^\infty(\Omega).
        \]
        To obtain the best spectral gap estimate we need to maximize
        \[
            f(b,C):=2\sqrt{Cb(n-1)\kappa-Cb^2}-C.
        \]
        A simple computation implies that 
        \[
            \max_{b,C} f(b,C)=f\left(\frac{(n-1)\kappa}{2},\frac{(n-1)^2\kappa^2}{4}\right)=\frac{(n-1)^2\kappa^2}{4}, 
        \]
        which implies precisely \eqref{eq:bppvar}.
    
        The proof of the sharpness is similar as before. Fix \(\delta>0\), denote \(s=\frac{(n-1)\kappa}{2}\) and define 
        \[
            u_\delta=\phi(\rho) e^{-s\rho} 
        \] on \(\Omega={\bf M}_{-\kappa^2}^n\) where \(\phi\) is the truncation function from \eqref{eq:truncation:1}. Since 
        \[
            \nabla_g u_\delta=(\phi'(\rho)-s\phi(\rho))e^{-s\rho}\nabla_g\rho,
        \]
        using a polar coordinate transform and the second branch of \eqref{eq:truncation:1} we have
        \begin{align*}
            \int_\Omega |\nabla_g u_\delta|^2\diff v_g&=\int_\frac{\delta}{2}^\delta (\phi'(t)-s\phi(t))^2e^{-2st}\frac{\sinh^{n-1}(
            \kappa t)}{\kappa^{n-1}}\diff t\ge \frac{s^2}{\kappa^{n-1}} \int_{\frac{\delta}{2}+1}^{\delta-1}e^{-2st}\sinh^{n-1}(\kappa t)\diff t\\
            &=\frac{s^2}{\kappa^{n-1}} \int_{\frac{\delta}{2}+1}^{\delta-1}\left(\frac{1}{2}-\frac{e^{-2\kappa t}}{2}\right)^{n-1}\diff t \\
            &\ge \frac{s^2}{\kappa^{n-1}}\left(\frac{\delta}{2}-2\right)\left(\frac{1}{2}-\frac{e^{-\kappa \delta-2\kappa}}{2}\right)^{n-1}:=E_1(\delta).
        \end{align*}
Recall the estimate \eqref{eq:estimate2:proof1} for \(p=2\) to obtain
        \[
            \int_\Omega |\Delta_g u_\delta|^2\diff v_g\le M_1+M_2+\frac{s^{2p}}{\kappa^{n-1}}\left(\frac{\delta}{2}-2\right)\left(p\coth\left(\left(\frac{\delta}{2}+1\right)\kappa\right)-1\right)^p\left(\frac{1}{2}-\frac{e^{-2\kappa (\delta-1)}}{2}\right)^{n-1}:=E_2(\delta),
        \]
        where \(M_1\) and \(M_2\) is the maximum of the bounded function 
        \[
            \Phi(t)=\left(s(2\coth(\kappa t)-2)\phi'(t)+s^2(1-2\coth(\kappa t))\phi(t)\right)^2\left(e^{-\kappa t}\sinh(\kappa t)\right)^{n-1}, 
        \]
        on the intervals \(\left[\frac{\delta}{2},\frac{\delta}{2}+1\right]\) and \(\left[\delta-1,\delta\right]\), respectively.

        Using the two estimates, we have 
        \[
             \lim_{\delta\to\infty}\frac{\displaystyle\int_\Omega|\Delta_g u_\delta|^2\diff v_g}{\displaystyle\int_\Omega|\nabla_g u_\delta|^2\diff v_g}\le \lim_{\delta\to\infty}\frac{E_2(\delta)}{E_1(\delta)}=s^2=\frac{(n-1)^2\kappa^2}{4},
        \]
        hence the inequality is sharp.
    \hfill $\square$

    \subsection{Higher-order estimates}\label{ssec:hogaps} We conclude this section by presenting some higher-order estimates concerning both problems from the previous subsections. In case of the clamped plate problem, the following higher-order estimates hold.
    \begin{theorem}\label{thm:ho:cp}
        Let \((M,g)\) be an \(n\)-dimensional Cartan-Hadamard manifold  as in Theorem \ref{thm:cpp:intro}. 
        Let \(\Omega\subset M\) be a domain and \(p>1\). Then for every \(u\in C_0^\infty(\Omega)\) and \(k\ge1\) one has
        \begin{align}
            \label{eq:homc1}\int_\Omega |\Delta_g^k u|^p\diff v_g&\ge \left(\frac{(n-1)^2(p-1)\kappa^2}{p^2}\right)^{kp}\int_\Omega|u|^p\diff v_g,\\
            \label{eq:homc2}\int_\Omega |\nabla_g \Delta_g^k u|^p\diff v_g&\ge \left(\frac{(n-1)\kappa}{p}\right)^p\left(\frac{(n-1)^2(p-1)\kappa^2}{p^2}\right)^{kp}\int_\Omega|u|^p\diff v_g.
        \end{align}
        Moreover, the constants in \eqref{eq:homc1} and \eqref{eq:homc2} are sharp.
    \end{theorem}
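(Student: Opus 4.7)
The plan is to derive both estimates by iterating the sharp inequalities already established, namely Theorem \ref{thm:cpp:intro} for \eqref{eq:homc1} and the $p$-fixed membrane inequality \eqref{eq:fmpvar:p} combined with \eqref{eq:homc1} for \eqref{eq:homc2}. Since $u\in C_0^\infty(\Omega)$, each of the functions $u,\,\Delta_g u,\,\Delta_g^2 u,\,\ldots,\,\Delta_g^{k-1}u$ also lies in $C_0^\infty(\Omega)$, which legitimately licences the iteration.

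To prove \eqref{eq:homc1}, I would apply Theorem \ref{thm:cpp:intro} successively to $\Delta_g^{k-1}u,\,\Delta_g^{k-2}u,\,\ldots,\,u$. Denoting the sharp constant from \eqref{eq:cppvar} by $C=\frac{(n-1)^{2p}(p-1)^p\kappa^{2p}}{p^{2p}}$, each step yields
\[
\int_\Omega |\Delta_g^{j+1}u|^p \diff v_g \;\ge\; C\int_\Omega |\Delta_g^j u|^p \diff v_g,\qquad j=0,1,\ldots,k-1,
\]
and $k$ iterations produce $C^k=\bigl(\tfrac{(n-1)^2(p-1)\kappa^2}{p^2}\bigr)^{kp}$, which is precisely the coefficient in \eqref{eq:homc1}. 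For \eqref{eq:homc2}, I would first apply \eqref{eq:fmpvar:p} to the test function $\Delta_g^k u$ to obtain
\[
\int_\Omega |\nabla_g \Delta_g^k u|^p\diff v_g \;\ge\; \Bigl(\tfrac{(n-1)\kappa}{p}\Bigr)^p\int_\Omega |\Delta_g^k u|^p\diff v_g,
\]
and then invoke \eqref{eq:homc1}; the two constants multiply to yield the right-hand side of \eqref{eq:homc2}.

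For sharpness I would use the same family of test functions from the proof of Theorem \ref{thm:cpp:intro}, namely $u_\delta=\phi(\rho)e^{-s\rho}$ on $\Omega={\bf M}^n_{-\kappa^2}$ with $s=\tfrac{(n-1)\kappa}{p}$ and the truncation $\phi$ from \eqref{eq:truncation:1}. The crucial observation is that on the model space $\Delta_g e^{-s\rho}=\bigl[s^2 - s(n-1)\kappa\coth(\kappa\rho)\bigr]e^{-s\rho}$, and as $\rho\to\infty$ this tends to $-\tfrac{(n-1)^2(p-1)\kappa^2}{p^2}e^{-s\rho}$. Iterating, $\Delta_g^k u_\delta$ and $\nabla_g\Delta_g^k u_\delta$ are asymptotically given, on the bulk region $[\tfrac{\delta}{2}+1,\delta-1]$, by $\bigl(-\tfrac{(n-1)^2(p-1)\kappa^2}{p^2}\bigr)^k e^{-s\rho}$ and $s$ times this expression (along the radial direction). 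Combining with the volume growth $\sinh^{n-1}(\kappa\rho)/\kappa^{n-1}$ and splitting into the bulk region and the two cut-off intervals $[\tfrac{\delta}{2},\tfrac{\delta}{2}+1]$, $[\delta-1,\delta]$ (on which the integrands are bounded uniformly in $\delta$), the ratio of the weighted $L^p$-norms converges to the claimed sharp constants as $\delta\to\infty$.

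The main technical obstacle will be controlling the correction terms from $\coth(\kappa\rho)-1\sim 2e^{-2\kappa\rho}$ accumulated during $k$ successive applications of $\Delta_g$: each iteration produces not a single exponential but a finite sum involving products of $\coth$-factors evaluated at $\rho$. However, the structure $(e^{-\kappa\rho}\sinh(\kappa\rho))^{n-1}=\bigl(\tfrac{1}{2}-\tfrac{e^{-2\kappa\rho}}{2}\bigr)^{n-1}$ appearing after combining with the radial volume element, together with the exponential decay $e^{-ps\rho}\sinh^{n-1}(\kappa\rho)$, guarantees that all correction terms are asymptotically negligible compared with the leading constant $s^{2pk}(p-1)^{pk}$ (respectively $s^p\cdot s^{2pk}(p-1)^{pk}$). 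Tracking these uniformly in $\delta$ essentially reduces, after the iteration, to a multi-level version of the estimate \eqref{eq:estimate2:proof1}.
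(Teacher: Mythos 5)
Your proposal is correct and follows essentially the same route as the paper: iterate Theorem \ref{thm:cpp:intro} to obtain \eqref{eq:homc1}, then apply the $p$-fixed-membrane estimate \eqref{eq:fmpvar:p} to $\Delta_g^k u$ for \eqref{eq:homc2}, and verify sharpness with the same test functions $u_\delta=\phi(\rho)e^{-s\rho}$ via the asymptotics $\Delta_g^k u_\delta\sim s^{2k}(1-p)^k e^{-s\rho}$. (Your use of \eqref{eq:fmpvar:p} rather than the $p=2$ version \eqref{eq:fmpvar}, which the paper's proof cites by apparent typo, is the correct reading.)
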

    \begin{proof}
        
         Inequality \eqref{eq:homc1} can be obtained by iterative applications of Theorem \ref{thm:cpp:intro} for the functions \(u:=\Delta^l u\) for all \(l\in\{0,1,\dots,k-1\}\). To obtain \eqref{eq:homc2}, apply inequality \eqref{eq:fmpvar}  as well for the function \(u:=\Delta^ku\). 
        
        To prove the sharpness,  let \(s=\frac{(n-1)\kappa}{p}\) as before, and choose $u_\delta = \phi(\rho)e^{-s\rho}$ on $\Omega={\bf M}_{-\kappa^2}^n,$
        where \(\phi\) is the truncation function from \eqref{eq:truncation:1}. For simplicity, let us denote \[ L(\delta )=(n-1)\kappa\coth(\kappa\delta )=ps\coth(\kappa\delta ).\]
        To obtain the proof for general \(k\ge 1\) we have to compute \(\Delta_g^k u_\delta\) and give an appropriate lower bound for it. This computation becomes more and more involved for higher values of \(k\);  however, based on the ideas used in case \(k=1\), we can  significantly simplify them.   
        
        The first observation is that the branches when \(t\in[\frac{\delta}{2},\frac{\delta}{2}+1]\) and \(t\in[\delta-1,\delta]\) do not have any contribution to the final limit. This is due to the fact that the integrands are bounded and the integration interval is of unit length, hence these integrals are dominated by the leading term provided by the branch when \(t\in[\frac{\delta}{2}+1,\delta-1]\). The same phenomenon occurs when \(k\ge1\). We can restrict our attention only to this case, and technically we can assume in the sequel that \(\phi=1\).
        
        The second observation is that since \(u\) is radially symmetric, we have 
        $\Delta_g u_\delta = u_\delta''+Lu_\delta'=s(s-L)e^{-s\rho}.$
        Based on the computation for the case \(k=1\), we are only interested in the asymptotic behavior when \(\delta\to\infty\). One can easily verify that the \(k\)-th derivatives of \(L\) satisfy
        \[
        \lim_{\delta\to\infty}L^{(k)}(\delta)=\begin{cases}
            ps, &\mbox{if } k=0,\\
            0,&\mbox{if } k\ge 1.
        \end{cases}
        \] Using this fact, for the bi-laplacian one has
        \begin{align*}
            \Delta_g^2 u_\delta&= u_\delta^{(4)} + 2Lu_\delta^{(3)}+L^2u_\delta''+L''u_\delta'+2L'u_\delta''+LL'u_\delta'\\
            &\sim u_\delta^{(4)} + 2Lu_\delta^{(3)}+L^2u_\delta''=s^2(s-L)^2e^{-s\rho}\sim s^4(1-p)^2e^{s\rho}.
        \end{align*}
        By similar argument for general \(k\ge 1\) one has 
        \begin{equation}
            \label{eq:asimpt1}
            \Delta_g^k u_\delta\sim s^{2k}(1-p)^ke^{-s\rho}.
        \end{equation}
        Using the estimates for \(\displaystyle\int_\Omega |u_\delta|^p \diff v_g\), from the proof of the case when \(k=1\) we obtain 
        \[
        \lim_{\delta\to\infty}\frac{\displaystyle \displaystyle \int_\Omega|\Delta_g^k u_\delta|^p \diff v_g}{\displaystyle \int_\Omega |u_\delta|^p \diff v_g} = s^{2kp}(1-p)^{kp}=\left(\frac{(n-1)^2(p-1)\kappa^2}{p^2}\right)^{kp}.  
        \]
        Taking the gradient in relation \eqref{eq:asimpt1} implies 
        $\nabla_g\Delta_g^k u_\delta\sim -s\cdot s^{2k}(1-p)^ke^{-s\rho},$
        hence we obtain 
        \[
        \lim_{\delta\to\infty}\frac{\displaystyle \int_\Omega|\nabla_g\Delta_g^k u_\delta|^p \diff v_g}{\displaystyle \int_\Omega |u_\delta|^p \diff v_g} = s^p s^{2kp}(1-p)^{kp}=\left(\frac{(n-1)\kappa}{p}\right)^p\left(\frac{(n-1)^2(p-1)\kappa^2}{p^2}\right)^{kp},  
        \]
     which concludes the proof.    
    \end{proof}
    
    In case of the buckling plate problem, the following higher-order estimates hold.
    \begin{theorem}\label{thm:ho:bp}
        Let \((M,g)\) be an \(n\)-dimensional Cartan-Hadamard manifold  as in Theorem \ref{thm:bpp:intro}. Let \(\Omega\subset M\) be a domain. Then for every \(u\in C_0^\infty(\Omega)\) and \(k\ge 1\) one has
        \begin{align}
            \label{eq:homc1buc}\int_\Omega |\Delta_g^k u|^2\diff v_g&\ge \left(\frac{(n-1)\kappa}{2}\right)^{4k-2}\int_\Omega|\nabla_g u|^2\diff v_g,\\
            \label{eq:homc2buc}\int_\Omega |\nabla_g \Delta_g^k u|^2\diff v_g&\ge \left(\frac{(n-1)\kappa}{2}\right)^{4k}\int_\Omega|\nabla_gu|^2\diff v_g.
        \end{align}
    \end{theorem}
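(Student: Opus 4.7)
The plan is to mimic the iterative strategy from the proof of Theorem \ref{thm:ho:cp}: alternately apply Theorem \ref{thm:bpp:intro} (to convert a Laplacian into a gradient) and the McKean-type fixed membrane inequality \eqref{eq:fmpvar} (to convert a gradient back into the function itself). Since $u\in C_0^\infty(\Omega)$ implies $\Delta_g^l u\in C_0^\infty(\Omega)$ for every $l\ge 0$, any iterated Laplacian is an admissible test function in these inequalities.

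For \eqref{eq:homc1buc} I would argue by induction on $k$; the base case $k=1$ is precisely Theorem \ref{thm:bpp:intro}, with the constant already of the form $\left(\tfrac{(n-1)\kappa}{2}\right)^{4\cdot 1-2}$. For the inductive step, Theorem \ref{thm:bpp:intro} applied with test function $\Delta_g^{k-1}u$ yields
\[
\int_\Omega|\Delta_g^k u|^2\diff v_g\ge \frac{(n-1)^2\kappa^2}{4}\int_\Omega|\nabla_g \Delta_g^{k-1}u|^2\diff v_g,
\]
while inequality \eqref{eq:fmpvar} (with the same test function) gives
\[
\int_\Omega|\nabla_g \Delta_g^{k-1}u|^2\diff v_g\ge \frac{(n-1)^2\kappa^2}{4}\int_\Omega|\Delta_g^{k-1}u|^2\diff v_g.
\]
Each such double step produces a factor $\left(\tfrac{(n-1)\kappa}{2}\right)^4$. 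Iterating this pair of inequalities $k-1$ times reduces matters to bounding $\int_\Omega|\Delta_g u|^2\diff v_g$, and a final invocation of Theorem \ref{thm:bpp:intro} on $u$ itself supplies the missing factor $\left(\tfrac{(n-1)\kappa}{2}\right)^2$, giving the total constant $\left(\tfrac{(n-1)\kappa}{2}\right)^{4(k-1)+2}=\left(\tfrac{(n-1)\kappa}{2}\right)^{4k-2}$, as claimed.

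Inequality \eqref{eq:homc2buc} then follows in one line: applying \eqref{eq:fmpvar} to $\Delta_g^k u\in C_0^\infty(\Omega)$ gives $\int_\Omega|\nabla_g\Delta_g^k u|^2\diff v_g\ge \tfrac{(n-1)^2\kappa^2}{4}\int_\Omega|\Delta_g^k u|^2\diff v_g$, which combined with the bound from \eqref{eq:homc1buc} produces precisely the constant $\left(\tfrac{(n-1)\kappa}{2}\right)^{4k}$.

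The main obstacle, if one can call it that, is only the bookkeeping of the exponent $4k-2$; there is no genuine analytic difficulty, since every ingredient is already available and admissibility of the test functions is automatic. Unlike Theorem \ref{thm:ho:cp}, the statement here makes no claim about sharpness, so no extremal construction is required --- although one could, in the spirit of Subsection \ref{ssec:buckling}, test with $u_\delta=\phi(\rho)e^{-s\rho}$ for $s=\tfrac{(n-1)\kappa}{2}$ and use an asymptotic of the form $\Delta_g^k u_\delta\sim s^{2k}e^{-s\rho}$ (analogous to \eqref{eq:asimpt1}) to confirm that the displayed constants are in fact optimal.
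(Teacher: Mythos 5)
Your argument is correct and is essentially the paper's own: the paper obtains \eqref{eq:homc1buc} by applying \eqref{eq:homc1} with $p=2$ to $\Delta_g u$ with index $k-1$ and then invoking Theorem~\ref{thm:bpp:intro}, which is exactly your alternating (Theorem~\ref{thm:bpp:intro} $+$ McKean) iteration unwound into a single citation, since $\left(\frac{(n-1)^2\kappa^2}{4}\right)^2=\frac{(n-1)^4\kappa^4}{16}$. Your derivation of \eqref{eq:homc2buc} from \eqref{eq:homc1buc} via \eqref{eq:fmpvar} applied to $\Delta_g^k u$ also matches the paper verbatim, and your handling of the $k=1$ base case is if anything slightly cleaner.
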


    \begin{proof} Inequality \eqref{eq:homc1buc} can be obtained as follows. First, use inequality \eqref{eq:homc1} for \(p=2\), \(u:=\Delta_g u\) and \(k:=k-1\); next, we have to apply Theorem \ref{thm:bpp:intro}. To obtain inequality \eqref{eq:homc2buc}, apply the spectral gap estimate of the fixed membrane problem \eqref{eq:fmpvar} as well, for the function \(u:=\Delta^ku\).  
        
        The sharpness can be proven similarly as before: choose \(u_\delta = \phi(\rho)e^{-s\rho}\) with \(s=\frac{(n-1)\kappa}{p}\) and \(\phi\) is the truncation function;  letting \(\delta\to \infty\), we  obtain the desired result.
    \end{proof}

    \section{Byproducts: Sharp Rellich inequalities}\label{sec:Rellich}
    This section  is devoted to  applications of our general functional inequalities to obtain various Rellich inequalities on Cartan-Hadamard manifolds. First,  we use Theorem \ref{thm:mainu} to extend the classical, weighted Rellich inequalities to Cartan-Hadamard manifolds. Next, based on these results, we state higher-order Rellich inequalities. Finally, we present short proofs to some, formally well-known Rellich-type inequalities, highlighting further applicability  of Theorems \ref{thm:mainu} \& \ref{thm:maingru}.

    \subsection{Classical and weighted Rellich inequalities}  
     The weighted Rellich inequality reads as follows;  see Mitidieri \cite[Theorem 3.1]{mitidieri2000simple} for the Euclidean version.
    \begin{theorem}
        \label{thm:Rellich:weighted}
        Let \((M,g)\) be an \(n\)-dimensional Cartan-Hadamard manifold with \(n\ge 5\),   \(\Omega\subset M\) be a domain and \(p,\gamma\in\mathbb{R}\) such that \[1<p<n/2\quad\mbox{and}\quad 2-\frac{n}{p}<\gamma<\frac{n(p-1)}{p}.\] Fix \(x_0\in \Omega\) and let \(\rho=d_{x_0}\). Then for every \(u\in C_0^\infty(\Omega)\) one has
        \begin{equation}
            \int_\Omega \rho^{\gamma p} |\Delta_g u|^p\diff v_g\ge \left(\frac{n}{p}-2+\gamma\right)^p\left(\frac{n(p-1)}{p}-\gamma\right)^p\int_\Omega\frac{|u|^p}{\rho^{(2-\gamma)p}}\diff v_g,
            \label{eq:weightedRellich}
        \end{equation}
    and the constant in \eqref{eq:weightedRellich} is sharp. 
    \end{theorem}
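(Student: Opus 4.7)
The plan is to apply the method of Theorem~\ref{thm:mainu} with parameter functions adapted to the scale-invariance of the Rellich quotient. I would set $w(\rho)=\rho^{\gamma p}$, $W(\rho)=C\rho^{(\gamma-2)p}$ (with $C$ the claimed sharp constant), and $L(\rho)=(n-1)/\rho$, the Laplace comparison available on any Cartan--Hadamard manifold (Theorem~\ref{comparison-theorem} at $\kappa=0$). Matching the homogeneity of every term in \eqref{eq:odigen} to the common scale $\rho^{\gamma p-2p}$ forces the homogeneous parameter functions $G(\rho)=a\rho^{-2(p-1)}$ and $H(\rho)=b\rho^{-1}$, with positive constants $a,b$ to be determined.

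Under these choices the ODI \eqref{eq:odigen} reduces, after dividing by $\rho^{\gamma p-2p}$, to the scalar inequality
\begin{equation*}
f(a,b) := (p-1)\bigl[\,2ab\,\beta - pab^2 - a^{p'}\,\bigr] - a\mu\beta \;\ge\; C,
\end{equation*}
where $\mu := \gamma p - 2(p-1)$, $\beta := \gamma p + n - 2p$, and $p' := p/(p-1)$. Maximizing first in $b$ and then in $a$ yields $b^* = \beta/p$ and $(a^*)^{1/(p-1)} = \beta(n(p-1)-\gamma p)/p^2$, at which a short computation gives $f(a^*,b^*) = (a^*)^{p'} = [\beta(n(p-1)-\gamma p)/p^2]^p$. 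After identifying $\beta/p = \frac{n}{p}-2+\gamma$ and $(n(p-1)-\gamma p)/p = \frac{n(p-1)}{p}-\gamma$, this coincides exactly with the constant in \eqref{eq:weightedRellich}, and the hypotheses on $\gamma$ guarantee that $a^*$, $b^*$ and $\beta$ are strictly positive.

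The main obstacle is the monotonicity requirement $(wG)' \le 0$ in \ref{cond:c2}, which reduces to $\mu \le 0$, i.e.\ $\gamma \le 2(p-1)/p$, and thus falls short of the full Rellich range whenever $n>2$. However, inspection of the proof of Theorem~\ref{thm:mainu} shows that $(wG)' \le 0$ is used only to ensure that $(\Delta_g\rho - L)\bigl[2(p-1)wGH - (wG)'\bigr] \ge 0$, so it suffices to verify the weaker pointwise bound $(wG)' \le 2(p-1)wGH$. For the homogeneous ansatz this amounts to $\gamma(2-p) \le 2(p-1)(n-p)/p$, which holds throughout $\gamma \in (2 - n/p,\,n(p-1)/p)$; re-running the argument of Theorem~\ref{thm:mainu} with this weaker condition in place of $(wG)' \le 0$ therefore covers the full admissible range of $\gamma$.

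Finally, sharpness of $C$ is established in the spirit of Subsection~\ref{ssec:clamped} using a test family $u_\delta = \phi_\delta(\rho)\rho^{-\alpha^*}$ with the Euclidean Rellich exponent $\alpha^* = (n-2p+\gamma p)/p$ and a suitable truncation $\phi_\delta$ analogous to \eqref{eq:truncation:1}: since $\rho^{-\alpha^*}$ formally saturates the Euclidean Rellich identity and the Cartan--Hadamard metric is Euclidean to leading order near $x_0$, a direct polar-coordinate computation (with boundary cross-term contributions that remain uniformly bounded and hence negligible in the limit) shows that the ratio of the two sides of \eqref{eq:weightedRellich} converges to $C$, so no larger constant is admissible.
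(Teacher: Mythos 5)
Your proposal follows the same strategy as the paper (substitute the homogeneous parameter functions $w(t)=t^{\gamma p}$, $W(t)=Ct^{(\gamma-2)p}$, $L(t)=(n-1)/t$, $G(t)=at^{2-2p}$, $H(t)=bt^{-1}$ into Theorem~\ref{thm:mainu}, reduce \eqref{eq:odigen} to a scalar inequality, and optimize over $a$ and $b$), and your computations of $b^*$, $a^*$ and the resulting constant agree with those in the paper. However, you spot something the paper's proof glosses over. The paper asserts that ``a straightforward computation and the Laplace comparison implies condition \ref{cond:c2} as well,'' but this is false for part of the stated $\gamma$-range: since $wG = a\,t^{\gamma p-2p+2}$, the requirement $(wG)'\le 0$ is equivalent to $\gamma\le 2(p-1)/p$, whereas the hypothesis allows $\gamma$ all the way up to $n(p-1)/p > 2(p-1)/p$ (recall $n\ge 5$). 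Your diagnosis is exactly right, and so is the repair: in the proof of Theorem~\ref{thm:mainu} the sign of $(wG)'$ enters only through the coefficient $2(p-1)wGH-(wG)'$ multiplying $\Delta_g\rho - L \ge 0$, so the hypothesis $(wG)'\le 0$ can safely be replaced by the weaker pointwise bound $(wG)'\le 2(p-1)wGH$, and a routine check (which you sketch, and which I verified case-by-case for $p<2$, $p=2$, $p>2$) shows that with $b=b^*$ this weaker bound holds on the entire interval $\gamma\in\bigl(2-\tfrac{n}{p},\tfrac{n(p-1)}{p}\bigr)$ precisely because $n\ge 2$. Thus your argument is not only correct but actually closes a gap left open in the paper's own proof.

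One small caution on the sharpness sketch: the truncation in \eqref{eq:truncation:1} lives on $[\delta/2,\delta]$ with $\delta\to\infty$, which is appropriate for the exponential optimizer of Theorem~\ref{thm:cpp:intro}, but for the Rellich optimizer $\rho^{-\alpha^*}$ the asymptotic saturation of the Laplace comparison $\Delta_g\rho\ge (n-1)/\rho$ (with equality in the Euclidean model) occurs as $\rho\to 0$, not $\rho\to\infty$. So the analogue of $\phi_\delta$ should concentrate its ``plateau'' near $x_0$ (e.g.\ supported on $[\varepsilon, 2\varepsilon]$, letting $\varepsilon\to 0$, or on $[\varepsilon, R_0]$ with fixed $R_0$ and $\varepsilon\to 0$) rather than far away. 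Your remark that ``the Cartan--Hadamard metric is Euclidean to leading order near $x_0$'' shows you have the right picture; just be explicit that the limiting regime is $\rho\to 0$, otherwise the lower bound from Theorem~\ref{comparison-theorem} will, in general, be strict and the quotient will not converge to $C$.
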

    \begin{proof} 
        In Theorem \ref{thm:mainu} we choose
        \[
            \quad L(t)=\frac{n-1}{t},\quad W(t)=\frac{C}{t^{(2-\gamma)p}},\quad  w=t^{\gamma p},\quad G(t)=\frac{a}{t^{2p-2}}\quad\mbox{and}\quad H(t)=\frac{b}{t},\qquad\forall t\in(0,\sup_\Omega\rho),   
        \]
        and for some constant \(C,a,b>0\) which will be determined later. Condition \ref{cond:c1} clearly holds, while a straightforward computation and the Laplace comparison (see Theorem \ref{comparison-theorem}) implies condition \ref{cond:c2} as well. Inequality \eqref{eq:odigen} from condition \ref{cond:c3} is equivalent to 
        \[
            f(a,b):=a (p-1) (2 (b + 1) n - (2 + b)^2 p) + 
            ap (2 b (p-1) + 4 p-n-2) \gamma - ap^2 \gamma^2 -(p-1)a^{\frac{p}{p-1}}\ge C.
        \] 
                The best value for the constant $C$ is obtained as
        \[
            \max_{a,b}f(a,b)=f\left(\left(\frac{n}{p}-2+\gamma\right)^{p-1}\left(\frac{n(p-1)}{p}-\gamma\right)^{p-1},\frac{n}{p}-2+\gamma\right)=\left(\frac{n}{p}-2+\gamma\right)^p\left(\frac{n(p-1)}{p}-\gamma\right)^p,
        \]
    which provides the desired inequality \eqref{eq:weightedRellich}.     The sharpness of this constant can be verified in a similar way as in the proof of Theorem \ref{thm:cpp:intro} by using suitable truncation functions. 
    \end{proof}
    \begin{remark}\label{cor:classicRellich}\rm 
        Choosing \(\gamma=0\) in Theorem \ref{thm:Rellich:weighted}, it yields that
        \[
            \int_\Omega |\Delta_g u|^p\diff v_g\ge \left(\frac{n}{p}-2\right)^p\left(\frac{n(p-1)}{p}\right)^p\int_\Omega\frac{|u|^p}{\rho^{2p}}\diff v_g,\qquad\forall u\in C_0^\infty(\Omega).
        \]
        In particular, for  \(p=2\) one has
        \[
            \int_\Omega |\Delta_g u|^2\diff v_g\ge \left(\frac{n^2-4n}{4}\right)^2\int_\Omega\frac{|u|^2}{\rho^{4}}\diff v_g,\qquad\forall u\in C_0^\infty(\Omega).
        \]
    \end{remark}
    \subsection{Higher-order Rellich inequalities} In order to obtain higher-order Rellich inequalities, we iteratively apply Theorem \ref{thm:Rellich:weighted} and use the sharp Hardy inequality
    \begin{equation}\label{eq:sharp:hardy}
        \int_\Omega |\nabla_g u|^p\diff v_g\ge \left(\frac{n-p}{p}\right)^p \int_\Omega |u|^p \diff v_g,\qquad\forall u\in C_0^\infty(\Omega),
    \end{equation}
    where $1<p<n$ see e.g.\ Kajántó, Kristály, Peter and Zhao \cite{ricattipair2023}.
    We have the following extension of Mitidieri \cite[Theorem 3.3]{mitidieri2000simple}.
    
    \begin{theorem}\label{thm:Rellich:ho}  Let \((M,g)\) be an \(n\)-dimensional Cartan-Hadamard manifold with \(n\ge 5\),   \(\Omega\subset M\) be a domain, fix \(x_0\in\Omega\) and define \(\rho=d_{x_0}\). We have the following inequalities$:$
    \begin{enumerate}[label={\rm (\roman*)}]
        \item If \(k\ge 1\) and \(n>2kp\), then 
        \[
            \int_\Omega |\Delta_g^k u|^p \diff v_g\ge \Lambda_{\text{\rm r},1}(k,p) \int_\Omega \frac{|u|^p}{\rho^{2kp}}\diff v_g,\quad\forall u\in C_0^\infty(\Omega),\]
        where the sharp constant is 
        \[
            \Lambda_{\text{\rm r},1}(k,p)=\prod_{s=1}^k\left(\frac{n}{p}-2s\right)^p\left(\frac{n(p-1)}{p}+2s-2\right)^p.  
        \]
        \item If \(k\ge 1\) and \(n>(2k+1)p\), then
        \[
            \int_\Omega |\nabla_g\Delta_g^k u|^p \diff v_g\ge \Lambda_{\text{\rm r},2}(k,p) \int_\Omega \frac{|u|^p}{\rho^{(2k+1)p}}\diff v_g,\quad\forall u\in C_0^\infty(\Omega)
        \]
        where the sharp constant is 
        \[
            \Lambda_{\text{\rm r},2}(k,p)=\left(\frac{n-p}{p}\right)^p\prod_{s=1}^k\left(\frac{n}{p}-2s-1\right)^p\left(\frac{n(p-1)}{p}+2s-1\right)^p.      
        \]
    \end{enumerate}
    \end{theorem}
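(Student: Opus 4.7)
The plan is to prove both inequalities by iterating Theorem \ref{thm:Rellich:weighted} (the weighted Rellich inequality) with suitably shifted weight exponents, and combining with the sharp Hardy inequality \eqref{eq:sharp:hardy} in the case of part (ii). This parallels the classical Euclidean derivation of Mitidieri \cite{mitidieri2000simple} and extends it to the curved setting through the distributional Laplace comparison already encoded in Theorem \ref{thm:Rellich:weighted}. Observe that if $u\in C_0^\infty(\Omega)$ then so is $\Delta_g^{j}u$ for every $j\ge 0$, so at each step of the iteration we may legitimately feed $\Delta_g^{j}u$ into the weighted Rellich inequality.

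\emph{Part (i).} Starting from $\int_\Omega |\Delta_g^k u|^p \diff v_g$, I apply Theorem \ref{thm:Rellich:weighted} successively with parameters $\gamma_s := -2(s-1)$ for $s=1,\dots,k$, each time with input function $v := \Delta_g^{k-s}u$, so that $\Delta_g v = \Delta_g^{k-s+1}u$. Step $s$ yields
\[
\int_\Omega \rho^{-2(s-1)p}\,|\Delta_g^{k-s+1}u|^p \diff v_g \;\ge\; \left(\tfrac{n}{p}-2s\right)^p\left(\tfrac{n(p-1)}{p}+2s-2\right)^p \int_\Omega \rho^{-2sp}\,|\Delta_g^{k-s}u|^p \diff v_g.
\]
Telescoping $s=1,\dots,k$ produces the desired product constant $\Lambda_{\mathrm{r},1}(k,p)$. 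The admissibility condition $2-n/p < \gamma_s < n(p-1)/p$ reduces at step $s$ to $n > 2sp$, so the hypothesis $n > 2kp$ handles every step.

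\emph{Part (ii).} First apply Hardy \eqref{eq:sharp:hardy} to $v := \Delta_g^k u$ to extract the leading factor $\bigl((n-p)/p\bigr)^p$ together with the weight $\rho^{-p}$; then iterate Theorem \ref{thm:Rellich:weighted} exactly as in Part (i) but with the shifted sequence $\gamma_s := -(2s-1)$, $s=1,\dots,k$. Step $s$ then contributes the factor $\bigl(n/p-2s-1\bigr)^p\bigl(n(p-1)/p+2s-1\bigr)^p$, and the admissibility condition becomes $n > (2s+1)p$, which is subsumed by $n > (2k+1)p$. Telescoping gives $\Lambda_{\mathrm{r},2}(k,p)$.

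\emph{Sharpness.} The sharpness of the two constants will be established on the Euclidean model $(\mathbb{R}^n,g_0)$, itself a Cartan-Hadamard manifold, by means of regularized radial test functions of power type $u_\varepsilon(x) \sim \rho^{-\alpha}\phi_\varepsilon(\rho)$, where $\phi_\varepsilon$ is an annular cutoff of the form used in the proofs of Theorems \ref{thm:cpp:intro} and \ref{thm:bpp:intro} (cutting off both near $\rho=0$ and near $\rho=\infty$, with the intermediate plateau providing the dominant contribution). The extremal exponents are $\alpha = n/p - 2k$ in part (i) and $\alpha = n/p - 2k - 1$ in part (ii), which are precisely the exponents compatible with all individual saturation conditions of the iterated weighted Rellich steps. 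The main technical obstacle I anticipate is showing that the \emph{same} approximating family simultaneously (asymptotically) saturates every inequality in the iterative chain; this ultimately reduces to the direct identity $\Delta_g^{j}(\rho^{-\alpha}) = c_j\,\rho^{-\alpha-2j}$ on $\mathbb{R}^n$ with $c_j$ equal to the product of the first $j$ factors of $\Lambda_{\mathrm{r},i}(k,p)^{1/p}$, together with routine bookkeeping to verify that the contributions of the two cutoff transition regions are of lower order as $\varepsilon\to 0$.
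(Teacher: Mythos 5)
Your proposal matches the paper's proof: both iterate Theorem \ref{thm:Rellich:weighted} with the identical choices $\gamma=2-2s$ (part (i)) and $\gamma=1-2s$ (part (ii)) applied to $\Delta_g^{k-s}u$, and both combine with the Hardy inequality \eqref{eq:sharp:hardy} for part (ii); the order in which you insert the Hardy step is immaterial since the inequalities simply compose. Your sharpness sketch is consistent with the paper's remark that it follows ``in the usual manner,'' so there is no substantive divergence.
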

    \begin{proof} To obtain (i), we apply Theorem \ref{thm:Rellich:weighted} for every \(s\in\{1,\dots,k\}\) with the choices \(u:= \Delta_g^{k-s} u\) and \(\gamma:=2-2s\). To prove (ii), first apply Theorem \ref{thm:Rellich:weighted} for every \(s\in\{1,\dots,k\}\) by choosing \(u:=\Delta_g^{k-s} u\) and \(\gamma\to1-2s\), and then use inequality \eqref{eq:sharp:hardy} with the choice \(u:= \Delta_g^k u\). 
  The sharpness of \(\Lambda_{\text{\rm r},1}(k,p)\) and \(\Lambda_{\text{\rm r},2}(k,p)\) can be proved is the usual manner. 
    \end{proof}

    \subsection{Further applications} We conclude the paper by showing further applicability of our general functional inequalities to produce short proofs for some improved Rellich-type inequalities.

    \begin{theorem}\label{thm:impr:1}
        Let \((M,g)\) be an \(n\)-dimensional Cartan-Hadamard manifold with \(n\ge 5\),   \(\Omega\subset M\) be a ball centered at \(x_0\in M\) with unit radius. Define \(\rho=d_{x_0}\). Then for every \(u\in C_0^\infty(\Omega)\) one has 
        \[\int_\Omega|\Delta_g u|^2 \diff v_g\ge \frac{n^2(n-4)^4}{16}\int_\Omega\frac{u^2}{\rho^4}\diff v_g+\frac{n(n-4)j_{0,1}^2}{2}\int_\Omega \frac{u^2}{\rho^2}\diff v_g.\]
    \end{theorem}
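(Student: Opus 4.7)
The plan is to invoke the general functional inequality of Theorem \ref{thm:mainu} with $p=2$ and unit weight $w\equiv 1$. Since $(M,g)$ is a Cartan-Hadamard manifold, the Laplace comparison (Theorem \ref{comparison-theorem} with $\kappa=0$) yields $\Delta_g\rho\ge (n-1)/\rho$, so condition \ref{cond:c2} is satisfied with the choice $L(t)=(n-1)/t$. The desired inequality will follow from Theorem \ref{thm:mainu} once we exhibit radial functions $G\in C^2(0,1)$ and $H\in C^1(0,1)$ for which the ODE inequality \eqref{eq:odigen} holds with the target weight
\[
W(t)=\frac{n^2(n-4)^2}{16\,t^4}+\frac{n(n-4)\,j_{0,1}^2}{2\,t^2},\qquad t\in(0,1).
\]

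For the leading singular part $\frac{n^2(n-4)^2}{16\,t^4}$, the natural seed is the pair that already saturates Rellich in Theorem \ref{thm:Rellich:weighted} (with $\gamma=0$, $p=2$): namely $G_0(t)=\frac{n(n-4)}{4t^2}$ and $H(t)=\frac{n-4}{2t}$. The second step is to perturb $G$ by a positive auxiliary function linked to the first Dirichlet eigenfunction of the 2D radial Laplacian on $(0,1)$, i.e.\ I would take
\[
G(t)=G_0(t)+\mu\,\phi(t),\qquad \phi(t)=J_0(j_{0,1}\,t),
\]
which is positive on $[0,1)$ and satisfies the Bessel identity $\phi''+\phi'/t+j_{0,1}^2\phi=0$. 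Inserting $G=G_0+\mu\phi$ into the left-hand side of \eqref{eq:odigen} and using that the pair $(G_0,H)$ already exactly matches the $t^{-4}$ term, the extra contribution linear in $\mu$ reduces to a differential operator acting on $\phi$, which via the Bessel ODE above converts to a coefficient proportional to $j_{0,1}^2/t^2$; tuning $\mu$ (or equivalently the normalization of $\phi$) should produce exactly the prefactor $\frac{n(n-4)}{2}$. The quadratic penalty $-G^{2}=-(G_0+\mu\phi)^2$ contributes a nonpositive term $-\mu^2\phi^2$, which is $O(1)$ on $(0,1)$ and must be absorbed into the remaining slack in the principal cancellation (or into the choice of $\mu$).

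The main obstacle will be the bookkeeping of the cross terms: specifically, making the coefficient of the residual $1/t^2$-contribution come out to be exactly $\frac{n(n-4)\,j_{0,1}^2}{2}$ after combining the Bessel identity for $\phi$ with the cross products $2\mu G_0\phi$ and $-2\mu\phi H^2$ in \eqref{eq:odigen}. If a direct computation with the single Bessel perturbation does not give the sharp prefactor, the natural remedy is the logarithmic-derivative substitution $H(t)=\frac{n-4}{2t}+\nu\,\phi'(t)/\phi(t)$ (keeping $G=G_0$), which linearizes the Bessel ODE inside \eqref{eq:odigen} and typically produces the sharp constant via the identity $(\phi'/\phi)'+(\phi'/\phi)^2+(\phi'/\phi)/t=-j_{0,1}^2$. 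In either case, the hypothesis that $\Omega$ is the unit ball is crucial, as it ensures $\phi>0$ on $\Omega$, making $G$ admissible in Theorem \ref{thm:mainu}. Once \eqref{eq:odigen} is verified for $t\in(0,1)$ with the target $W$, an application of Theorem \ref{thm:mainu} yields the claimed improved Rellich inequality.
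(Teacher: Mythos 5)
Your ``natural remedy'' is, in fact, the paper's proof. The paper keeps $G(t)=G_0(t)=\frac{n(n-4)}{4t^2}$ and perturbs only $H$, taking
\[
H(t)=\frac{n-4}{2t}+\frac{j_{0,1}\,J_1(j_{0,1}t)}{J_0(j_{0,1}t)},
\]
which, writing $\phi(t)=J_0(j_{0,1}t)$ and using $J_0'=-J_1$, is exactly your $H=\frac{n-4}{2t}+\nu\frac{\phi'}{\phi}$ with $\nu=-1$. The sign is forced: $\phi>0$ and $\phi'<0$ on $(0,1)$ while $\phi(1)=J_0(j_{0,1})=0$, so $\phi'/\phi\to-\infty$ as $t\to1^-$; a positive $\nu$ would make $H$ negative near the boundary and violate the positivity hypothesis of Theorem \ref{thm:mainu}, whereas $\nu=-1$ keeps $H>0$. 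Substituting $H=H_0+h$ with $h=-\phi'/\phi$ into \eqref{eq:odigen} (with $p=2$, $w\equiv1$, $L=(n-1)/t$) and using your Riccati identity in the form $h'+h/t-h^2=j_{0,1}^2$, the $h$-dependent contribution becomes $2G_0\bigl[h'+\tfrac{3h}{t}-h^2\bigr]+2G_0'h = 2G_0\bigl[j_{0,1}^2+\tfrac{2h}{t}\bigr]+2G_0'h$; since $G_0\propto t^{-2}$ implies $\tfrac{4G_0}{t}+2G_0'=0$, the $h$-terms cancel and one is left with exactly $2G_0j_{0,1}^2=\frac{n(n-4)j_{0,1}^2}{2t^2}$ on top of the classical Rellich weight $\frac{n^2(n-4)^2}{16t^4}$. (Your $W$ correctly has the square $(n-4)^2$; the exponent $4$ in the stated theorem appears to be a typo.)

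Your \emph{primary} route --- perturbing $G\to G_0+\mu\phi$ with $H=H_0$ fixed --- does not yield the stated inequality. The terms acting on the perturbation in \eqref{eq:odigen} combine into $-\phi''-\frac{3\phi'}{t}$ after the $\phi'$-contributions from $2(\phi H_0)'$ and $-\phi'L$ are collected, which is \emph{not} the two-dimensional Bessel operator $-\phi''-\frac{\phi'}{t}$. Applying the Bessel ODE therefore leaves a residual $-\frac{2\phi'}{t}$, and the $\mu$-linear improvement comes out as $\mu\bigl(j_{0,1}^2\phi(t)-\tfrac{2\phi'(t)}{t}\bigr)$, which is not proportional to $t^{-2}$; there is also the $-\mu^2\phi^2$ loss from $-G^2$. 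You would get \emph{an} improvement of Rellich (both residual pieces are nonnegative on $(0,1)$), but not the specific $\frac{n(n-4)j_{0,1}^2}{2}\rho^{-2}$ term claimed. So the logarithmic-derivative perturbation of $H$ should be presented as the main step, not as a fallback.
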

    \begin{proof}
        Apply Theorem \ref{thm:mainu} for $p=2$ and with the following chooses: 
        \[L(t)=\frac{n-1}{t},\quad G(t)=\frac{n(n-4)}{4t^2}\quad\mbox{and}\quad H(t)=\frac{n-4}{2t}+\frac{j_{0,1}\cdot J_1(j_{0,1}t)}{J_0(j_{0,1}t)},\qquad \forall t\in (0,1);\]
        A simple computation yields the desired inequality. 
    \end{proof}

\begin{remark}\rm We notice that the leading constant $\frac{n^2(n-4)^4}{16}$ in Theorem \ref{thm:impr:1} is sharp. 
	
\end{remark}

    The second result deals with the case when \({\bf K}\le-\kappa^2\) for some \(\kappa>0\), and can be formulated as follows.
    \begin{theorem}\label{thm:impr:2}
        Let \((M,g)\) be an \(n\)-dimensional Cartan-Hadamard manifold with \(n\ge 5\), and  the sectional curvature satisfies \({\bf K}\le-\kappa^2\) for some \(\kappa>0\). Let \(\Omega\subset M\) be a domain, fix \(x_0\in\Omega\) and define \(\rho=d_{x_0}\). Then for every \(u\in C_0^\infty(\Omega)\) one has 
        \[\int_\Omega|\Delta_g u|^2\diff v_g\ge \frac{(n-1)^4\kappa^4}{16}\int_\Omega u^2\diff v_g+\frac{(n-1)^2\kappa^2}{8}\int_\Omega \frac{u^2}{\rho^2}\diff v_g+\frac{(n-1)^3(n-3)\kappa^4}{8}\int_\Omega\frac{u^2}{\sinh^2(\kappa\rho)}\diff v_g.\]
    \end{theorem}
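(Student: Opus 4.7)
The plan is to invoke Theorem \ref{thm:mainu} with $p=2$ and weight $w\equiv 1$, and to make the choices \[L(t)=(n-1)\kappa\coth(\kappa t), \qquad G\equiv \frac{(n-1)^2\kappa^2}{4}, \qquad H(t)=\frac{(n-1)\kappa\coth(\kappa t)}{2}-\frac{1}{2t}.\] Condition \ref{cond:c1} is immediate, and condition \ref{cond:c2} splits into two parts: the Laplace comparison theorem (Theorem \ref{comparison-theorem}) gives $\Delta_g\rho\ge (n-1)\kappa\coth(\kappa\rho)=L(\rho)$, while $(wG)'=0\le 0$ since $G$ is constant. One also checks that $H>0$ on $(0,\infty)$ using $\coth(\kappa t)>\frac{1}{\kappa t}$ combined with $n\ge 5$.

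The core of the argument will be to verify that, with these choices, the right-hand side of \eqref{eq:odigen} reduces exactly to \[W(t)=\frac{(n-1)^4\kappa^4}{16}+\frac{(n-1)^2\kappa^2}{8t^2}+\frac{(n-1)^3(n-3)\kappa^4}{8\sinh^2(\kappa t)}.\] Since $G$ is constant the inequality \eqref{eq:odigen} simplifies to $2G\bigl(H'+HL-H^2\bigr)-G^2\ge W$, so the task becomes a purely one-variable computation of $H'+HL-H^2$. Writing $H=H_1+H_2$ with $H_1(t)=\frac{(n-1)\kappa\coth(\kappa t)}{2}$ and $H_2(t)=-\frac{1}{2t}$, one observes that the mixed terms $2H_1H_2$ appearing in $H^2$ cancel with $H_2 L$ appearing in $HL$, leaving \[H'+HL-H^2=\Bigl(H_1'+H_1L-H_1^2\Bigr)+\Bigl(H_2'-H_2^2\Bigr).\] The first bracket is computed via the identity $\coth^2(x)-\frac{1}{\sinh^2(x)}=1$ and yields $\frac{(n-1)^2\kappa^2}{4}+\frac{(n-1)(n-3)\kappa^2}{4\sinh^2(\kappa t)}$, while the second bracket gives $\frac{1}{4t^2}$. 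Multiplying by $2G=\frac{(n-1)^2\kappa^2}{2}$ and subtracting $G^2=\frac{(n-1)^4\kappa^4}{16}$ reproduces $W(t)$ on the nose.

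The main technical obstacle I anticipate is not the algebra itself, but verifying the cancellation structure of the Riccati-type expression $H'+HL-H^2$ cleanly: the choice of $\alpha=-\frac{1}{2}$ in $H_2=\alpha/t$ is \emph{forced} by requiring the remainder $-\alpha-\alpha^2$ to equal $\frac{1}{4}$, i.e.\ by saturating $(\alpha+\frac{1}{2})^2\le 0$. Once this is observed, the rest of the verification is mechanical and Theorem \ref{thm:mainu} delivers the stated inequality directly, without any sharpness or truncation argument being required for the statement as given.
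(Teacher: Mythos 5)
Your proposal is correct and takes exactly the same route as the paper: same invocation of Theorem \ref{thm:mainu} with $p=2$, $w\equiv 1$ and the identical choices of $L$, $G$, $H$; the paper simply states the choices and defers the verification to a ``simple computation.'' Your observation that the cross term $2H_1H_2$ in $H^2$ cancels against $H_2 L$ because $L=2H_1$, reducing the Riccati expression to two independent brackets, is a tidy way of organizing that computation (and your identification of $\alpha=-1/2$ as the extremizer for $H_2=\alpha/t$ explains where the factor $\tfrac{1}{4t^2}$ comes from), but it is a presentational refinement rather than a different method.
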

    \begin{proof}
        We apply Theorem \ref{thm:mainu} by choosing 
        \[L(t)=(n-1)\kappa\coth(\kappa t),\quad G(t)=\frac{(n-1)^2\kappa^2}{4}\quad\mbox{and}\quad H(t)=\frac{(n-1)\kappa\coth(\kappa t)}{2}-\frac{1}{2t},\qquad \forall t>0.\]
        The required  inequality follows after a simply computation. 
    \end{proof}

\begin{remark}\rm 
The inequality from Theorem \ref{thm:impr:2} can be compared with the main results from	Berchio,  Ganguly and Roychowdhury \cite{Berchio-etal}, where the authors established various Rellich-type identities, which imply in turn sharp Rellich-type improvements on the hyperbolic space. 
\end{remark}

    The third result is a simple application of Theorem \ref{thm:maingru}.
    \begin{theorem}\label{thm:impr:3}
    	Let \((M,g)\) be an \(n\)-dimensional Cartan-Hadamard manifold with \(n\ge 8\),   \(\Omega\subset M\) be a domain, \(x_0\in M\) be fixed and define \(\rho=d_{x_0}\).  Then for every \(u\in C_0^\infty(\Omega)\) one has 
        \[\int_\Omega|\Delta_gu|^2 \diff v_g\ge \frac{n^2}{4}\int_\Omega \frac{|\nabla_g u|^2}{\rho^2}\diff v_g,\]
        and the constant $\frac{n^2}{4}$ is sharp. 
    \end{theorem}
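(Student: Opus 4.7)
The plan is to apply Theorem \ref{thm:maingru} with carefully chosen radial parameter functions on $(0,\sup_\Omega\rho)$: namely $L(t)=(n-1)/t$ (the Laplace comparison bound), $G(t)=n(n-4)/(4t^2)$, $H(t)=(n-4)/(2t)$, and $W(t)=n(n-8)/(4t^2)$. These choices immediately produce the target constant through the identity
\[
2G(t)-W(t)=\frac{n(n-4)}{2t^2}-\frac{n(n-8)}{4t^2}=\frac{n^2}{4t^2},
\]
so once the hypotheses of Theorem \ref{thm:maingru} are verified, the desired inequality follows at once.

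Conditions \ref{cond:c1p} and \ref{cond:c2p} are routine: the regularity is immediate and \ref{cond:c2p} is precisely the Laplace comparison $\Delta_g\rho\ge(n-1)/\rho$ on Cartan-Hadamard manifolds. The heart of the argument is checking \ref{cond:c3p}. Since $G$ is radial, $\Delta_g G(\rho)=G''(\rho)+G'(\rho)\Delta_g\rho$; as $G'(t)=-n(n-4)/(2t^3)<0$ for $n\ge 5$, the lower bound $\Delta_g\rho\ge L(\rho)$ reverses to give the pointwise estimate
\[
\Delta_g G(\rho)+G(\rho)^2\le G''(\rho)+G'(\rho)L(\rho)+G(\rho)^2=\frac{n(n-4)^2(n-8)}{16\rho^4}.
\]
A direct differentiation then shows the left-hand side of \eqref{eq:pdigen} collapses to \emph{exactly} $n(n-4)^2(n-8)/(16\rho^4)$, so \ref{cond:c3p} holds with tight equality in the worst-case direction. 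The dimensional hypothesis $n\ge 8$ enters precisely here to keep $W\ge 0$; the borderline case $n=8$ forces $W\equiv 0$, which sits on the boundary of the positivity requirement and can be absorbed by a vanishing perturbation $W_\varepsilon(t)=\varepsilon/t^2$ with $\varepsilon\downarrow 0$.

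For sharpness I would mimic the truncation strategy of Subsection~\ref{ssec:clamped} on the Euclidean model $M=\mathbb R^n$. The equality conditions of the two convexity steps in the proof of Theorem \ref{thm:maingru} require $\nabla_g u=-uH(\rho)\nabla_g\rho$ and $\Delta_g u=-G(\rho)u$, which are jointly satisfied by the radial profile $u(\rho)=\rho^{-(n-4)/2}$. Plugging a truncated version $u_\delta(x)=\phi(|x|)\,|x|^{-(n-4)/2}$, with $\phi$ as in \eqref{eq:truncation:1} supported in an annulus that expands to infinity, into the Rayleigh quotient $\int_\Omega|\Delta_g u_\delta|^2\diff v_g\big/\int_\Omega\rho^{-2}|\nabla_g u_\delta|^2\diff v_g$ and controlling the truncation contributions just as in the earlier sharpness arguments yields a limit of $n^2/4$. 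The main delicate point throughout is the fine-tuning of $(G,H,W)$ so that \ref{cond:c3p} saturates rather than merely holds; the optimization reveals $H(t)=(n-4)/(2t)$ as the midpoint of the admissible range paired with the optimal $G(t)=n(n-4)/(4t^2)$, which together force the critical constant to come out as $n^2/4$.
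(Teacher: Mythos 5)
Your argument is essentially the paper's proof: apply Theorem \ref{thm:maingru} with $L(t)=(n-1)/t$, $G(t)=n(n-4)/(4t^2)$, $H(t)=(n-4)/(2t)$, and verify \ref{cond:c3p} by using $G'<0$ together with the Laplace comparison to bound $\Delta_g G(\rho)$ above by $G''(\rho)+G'(\rho)L(\rho)$, reducing the check to an ODE that holds with equality. The one discrepancy is the weight: the paper prints $W(t)=\tfrac{n(n-8)}{4}$ (constant), while you take $W(t)=\tfrac{n(n-8)}{4t^2}$; your choice is the correct one, since only it gives $2G(\rho)-W(\rho)=\tfrac{n^2}{4\rho^2}$, and a direct computation shows the paper's constant $W$ would make \ref{cond:c3p} fail for $\rho<\sqrt{(n-4)/n}$ — so the paper has a misprint (a missing $t^2$) that you have silently repaired. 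Your handling of the borderline case $n=8$ by a vanishing perturbation of $W$ matches the paper's remark, and your extremal radial profile $u=\rho^{-(n-4)/2}$, which saturates both convexity steps $\nabla_g u=-uH(\rho)\nabla_g\rho$ and $\Delta_g u=-G(\rho)u$ simultaneously, together with the annular truncation, is the intended sharpness argument that the paper merely references.
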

    
    \begin{proof}
        We apply Theorem \ref{thm:maingru} with the  choices
        \[
            L(t)=\frac{n-1}{t},\quad G(t)=\frac{n(n-4)}{4t^2},
        \quad H(t)=\frac{n-4}{2t} \quad\mbox{and}\quad W(t)=\frac{n(n-8)}{4},\qquad \forall t>0,    
        \]
       which provides the proof. For the sharpness of  $\frac{n^2}{4}$ we may proceed as in the proof of Theorem  \ref{thm:cpp:intro}.
    \end{proof}
    
    \begin{remark}\rm  Note that Theorem \ref{thm:impr:3} is expected to hold for every $n\geq 5$; however, the technical condition $n\ge 8$ is required to guarantee  the applicability of  Theorem \ref{thm:maingru} ($W>0$ whenever $n\ge 9$, and if $n=8$ then $W=0$, in which case the proof of Theorem \ref{thm:maingru} is obvious). A similar restriction also appeared in the Finsler context by proving quantitative Rellich inequalities, see Krist\'aly and Repovš \cite{Kristaly-Repovs}, where another approach were applied.  
    \end{remark}


\end{document}